\newcommand{\NN}{\mathbb N}
\newcommand{\IN}{\mathbb Z}
\newcommand{\RN}{\mathbb R}
\newcommand{\QN}{\mathbb Q}
\newcommand{\CN}{\mathbb C}
\newcommand{\mi}{\mathrm i}
\newcommand{\SP}[2]{\left\langle {#1} , {#2} \right\rangle}
\newcommand{\ket}[1]{\left|{{#1}}\right>}
\newcommand{\norm}[1]{\left|\left|{#1}\right|\right|}
\newcommand{\fock}[1]{\mathcal{F}_{{#1}}}
\newcommand{\Spec}[1]{\mathrm{\sigma}\left({#1}\right)}
\newcommand{\Sph}[1]{\mathbb{S}^{{#1}}}
\newcommand{\tang}[2]{\mathrm{T}_{{#1}}{{#2}}}
\newcommand{\intd}{\mathrm d}
\newcommand{\abs}[1]{\left|{#1}\right|}
\newcommand{\inv}[1]{{#1}^{-1}}
\newcommand{\convol}[2]{\left({#1}*{#2}\right)}
\newcommand{\vspanC}[2]{\text{span}_\CN\left\{{#1}~\middle|~{#2}\right\}}
\newcommand{\fourier}{\mathcal F}
\newcommand{\symp}{\poisson}
\newcommand{\poisson}[2]{\left[{#1},{#2}\right]}
\newcommand{\sympf}[2]{\omega\left({#1},{#2}\right)}
\newcommand{\LP}{\Delta_\bp}
\newcommand{\LQ}{\Delta_\bq}
\newcommand{\Lp}[2]{\mathcal{L}^{{#1}}_{{#2}}}
\newcommand{\Lpn}[3]{\norm{{#1}}_{\Lp{{#2}}{{#3}}}}
\newcommand{\ddim}{d}
\newcommand{\VF}[1]{\hat{{#1}}}
\newcommand{\HamVec}[1]{X_{{#1}}}
\newcommand{\HF}{\mathcal{H}}
\newcommand{\HVF}{\mathcal{H}_{\text{Vl.}}}
\newcommand{\HHF}{\mathcal{H}_{\text{Ht.}}}
\newcommand{\PHHF}{\bar{\mathcal{H}}_\text{Ht.}}
\newcommand{\RX}{{\RN^{\ddim}_{\bx}}}
\newcommand{\RV}{{\RN^{\ddim}_{\bv}}}
\newcommand{\RW}{{\RN^{2\ddim}_{\bw}}}
\newcommand{\RXi}{{\RN^{\ddim}_{\xi}}}
\newcommand{\RZ}{{\RN^{2\ddim}_{\bz}}}
\newcommand{\RZH}{{\RN^{2\ddim}_{\VF{\bz}}}}
\newcommand{\RNp}{\RN_{\geq 0}}
\newcommand{\naX}{\nabla_{\bx}}
\newcommand{\naV}{\nabla_{\bv}}
\newcommand{\naXi}{\nabla_{\xi}}
\newcommand{\del}[2]{\frac{\partial {#1}}{\partial {#2}}}
\newcommand{\Del}[1]{\mathrm D^{#1}}
\newcommand{\bx}{\textbf x}
\newcommand{\bv}{\textbf v}
\newcommand{\bw}{\textbf w}
\newcommand{\bz}{\textbf z}
\newcommand{\by}{\textbf y}
\newcommand{\be}{\textbf e}
\newcommand{\bp}{\textbf p}
\newcommand{\bq}{\textbf q}
\newcommand{\bn}{\textbf 0}
\newcommand{\ba}{\textbf a}
\newcommand{\bb}{\textbf b}
\newcommand{\bc}{\textbf c}
\newcommand{\twovec}[2]{\left(\begin{array}{c} {#1} \\ {#2} \end{array}\right)}
\newcommand{\qma}{\mathfrak{a}}
\newcommand{\qmb}{\mathfrak{b}}
\newcommand{\qmN}{\mathfrak{N}}
\newcommand{\XV}{\mathfrak{X}}
\newcommand{\YV}{\mathfrak{Z}}
\newcommand{\domN}{\mathcal{V}}
\newcommand{\perB}[1]{\mathcal{W}_{{#1}}}
\newcommand{\InDec}[1] {\mathrm{ind}_{\text{dec.}}\left({{#1}}\right)}
\newcommand{\InOsc}[1] {\mathrm{ind}_{\text{osc.}}\left({{#1}}\right)}
\newcommand\pgfmathsinandcos[3]{%
  \pgfmathsetmacro#1{sin(#3)}%
  \pgfmathsetmacro#2{cos(#3)}%
}
\newcommand\LongitudePlane[3][current plane]{%
  \pgfmathsinandcos\sinEl\cosEl{#2} 
  \pgfmathsinandcos\sint\cost{#3} 
  \tikzset{#1/.style={cm={\cost,\sint*\sinEl,0,\cosEl,(0,0)}}}
}
\newcommand\LatitudePlane[3][current plane]{%
  \pgfmathsinandcos\sinEl\cosEl{#2} 
  \pgfmathsinandcos\sint\cost{#3} 
  \pgfmathsetmacro\yshift{\cosEl*\sint}
  \tikzset{#1/.style={cm={\cost,0,0,\cost*\sinEl,(0,\yshift)}}} %
}
\newcommand\DrawLongitudeHalfCircle[2][1]{
  \LongitudePlane{\angEl}{#2}
  \tikzset{current plane/.prefix style={scale=#1}}
  \pgfmathsetmacro\angVis{atan(sin(#2)*cos(\angEl)/sin(\angEl))} %
  \draw[current plane] (\angVis:1) arc (\angVis:90:1);
  \draw[current plane,dashed] (-90:1) arc (-90:\angVis:1);
}
\newcommand\DrawLatitudeCircle[2][1]{
  \LatitudePlane{\angEl}{#2}
  \tikzset{current plane/.prefix style={scale=#1}}
  \pgfmathsetmacro\sinVis{sin(#2)/cos(#2)*sin(\angEl)/cos(\angEl)}
  \pgfmathsetmacro\angVis{asin(min(1,max(\sinVis,-1)))}
  \draw[current plane] (\angVis:1) arc (\angVis:-\angVis-180:1);
  \draw[current plane,dashed] (180-\angVis:1) arc (180-\angVis:\angVis:1);
}
\tikzset{%
  >=latex, 
  inner sep=0pt,%
  outer sep=2pt,%
  mark coordinate/.style={inner sep=0pt,outer sep=0pt,minimum size=3pt,
    fill=black,circle}%
}
\begin{document}


\theoremstyle{plain}
\newtheorem{thm}{Theorem}[section]
\newtheorem{lem}[thm]{Lemma}
\newtheorem{cor}[thm]{Corollary}
\newtheorem{prop}[thm]{Proposition}
\newtheorem{conj}[thm]{Conjecture}

\theoremstyle{definition}
\newtheorem{defn}[thm]{Definition}

\theoremstyle{remark}
\newtheorem{rmk}[thm]{Remark}
\newtheorem{exam}[thm]{Example}

\title{Symmetry reduction and periodic solutions in Hamiltonian Vlasov systems}
\author{R.A. Neiss
	\thanks{
		Electronic address: \texttt{rneiss@math.uni-koeln.de}
	}
}
\affil{
	Universität zu Köln, Mathematisches Institut, \\
	Weyertal 86-90, 50931 Köln, Germany
}
\date{\today}

\maketitle

\begin{abstract}
\noindent In this paper, we discuss a general approach to find periodic solutions bifurcating from equilibrium points of classical Vlasov systems. The main access to the problem is chosen through the Hamiltonian representation of any Vlasov system, firstly put forward in \cite{froehlichknowlesschwarz} and generalized in \cite{neiss,neisspickl}. The method transforms the problem into a setup of complex valued $\mathcal{L}^2$ functions with phase equivariant Hamiltonian. Through Marsden-Weinstein symmetry reduction \cite{marsdenweinsteinreduction}, the problem is mapped on a Hamiltonian system on the quotient manifold $\mathbb{S}^{\mathcal{L}^2}/\mathbb{S}^1$ which actually proves to be necessary to close many trajectories of the dynamics. As a toy model to apply the method we use the Harmonic Vlasov system, a non-relativistic Vlasov equation with attractive harmonic two-body interaction potential. The simple structure of this model allows to compute all of its solutions directly and therefore test the benefits of the Hamiltonian formalism and symmetry reduction in Vlasov systems.
\end{abstract}

\section{Introduction}

The problem of finding periodic solutions to Vlasov type equations has been around for a couple of decades. While there exist some results for solutions on periodic domains \cite{battrein}, or periodic solutions under boundary conditions \cite{bostan}, the existence of periodic solutions on the full domain $\RX\times\RV$, in particular $\ddim=3$, for any type of interaction potential seems to be untreated so far. 

The construction of periodic solutions can usually be achieved by two major methods, both of which require a Hamiltonian formulation of the dynamical system.  One option is to find extremal points of some action functional evaluated on a Banach space of closed curves. This is particularly useful, if one has a semi-bounded Hamiltonian, giving a chance to apply some mountain pass techniques.

Alternatively, one can first try to identify stationary points of the dynamics and find non-resonant eigenvalues of the second derivative of the Hamiltonian. This allows to construct families of closed curves oscillating around the stationary point. This bifurcation method is well developed for many finite and infinite dimensional systems \cite{ambrosettiprodi}. Finding and classifying equilibrium states is also an active topic of research, considering for example stability questions \cite{mouhot,rein}. \\

\noindent This paper intends to study the Hamiltonian formulation of Vlasov systems given by the Hamiltonian Vlasov equation \cite{froehlichknowlesschwarz,neiss,neisspickl} with respect to the search of periodic solutions thereof. In the Hamiltonian Vlasov picture, the phase space variable is a complex valued $\Lp{2}{}$ function $\alpha$ on $\RX\times\RV$, s.t. $f\equiv\abs{\alpha}^2\in\Lp{1}{}$ is a classical Vlasov density again. The Hamiltonian functional can be directly constructed from the energy functional of the classical Vlasov system. 

In this formulation a couple of remarkable features are revealed that underline its potential relevance for further studies. For example, it is possible for the first time to identify conserved quantities of the Vlasov system such as mass, and linear and angular momentum as Noether conjugates of continuous symmetries. Encouraged by geometric ideas usually applied to finite-dimensional problems, we study the ideas of symplectic symmetry reduction usually attributed to Marsden-Weinstein \cite{marsdenweinsteinreduction} in this new PDE setup. \\

\noindent The main motivation to remove continuous symmetries is their violation of bifurcation conditions. Symmetries always contribute to the kernel of the second derivative of the Hamiltonian, because they generate continuous families of stationary points and periodic families, degenerating the bifuraction equation. In this paper we discuss different approaches to remove these degeneracies, partly succeeding.

It turns out, that there are two types of symmetries. The first type (phase invariance) acts smoothly on the model Banach space ($\subseteq\Lp{2}{}$) and can be treated by symplectic symmetry reduction following Marsden and Weinstein \cite{marsdenweinsteinreduction}. 
The second type (translation/ rotation invariance) is much more difficult to overcome. The key issue is that the group action is not smooth anymore, only continuous. That defies any chance to apply global symmetry reduction and projection onto a symplectic quotient manifold based on existing results, as a key condition is violated. Nevertheless there is hope to remove them with some local reduction principle around equilibria points if one is able to choose an appropriate topological setup. Still, this remains open as technical difficulties with regularities arise. \\

\noindent Treating the reduction of the phase symmetry, we find that it is not only convenient but even necessary for the search of periodic solutions. 

The phase invariance of the Hamiltonian yields mass conservation and motivates restriction to the sphere $\Sph{\Lp{2}{}}$. Projection onto the quotient space $\Sph{\Lp{2}{}}/\Sph{1}$ not only increases the number of stationary points, but also predicts the correct periodic families bifurcating around them in the studied example of Harmonic Vlasov. Indeed, these bifurcating families are invisible for other methods, as in the unreduced system they are also governed by global phase oscillation usually in irrational relation to the profile oscillation, hence only relatively periodic in the notion of \cite{marsdenweinsteinreduction}. For our simplified example of Harmonic Vlasov (Thm. \ref{thm:harmonic:spectral-parameterization}), we find a one-to-one correspondence of the Hamiltonian's spectrum and families of bifuracting periodic solutions in the direction of eigenvectors. This indicates the method's possible impact on more complicated problems. \\

\noindent The paper is structured as follows. Section \ref{sec:harmonic:symplectic-reduction} gives a short introduction to the principle of symplectic symmetry reduction alongside its application to Hamiltonian PDEs in general. It also discusses the setup of the Hamiltonian Vlasov formulation. In Section \ref{sec:harmonic:harmonic-vlasov} the method is applied to the Harmonic Vlasov system. The paper closes with some final comments on further applications.  \\

\section{Symplectic symmetry reduction}

\label{sec:harmonic:symplectic-reduction}

The idea of symplectic symmetry reduction in the sense of Marsden and Weinstein \cite{marsdenweinsteinreduction} is the reduction of the phase space of a Hamiltonian system according to its continuous symmetries and the associated conserved quantities. This procedure is particularly useful as it removes degeneracies around bifurcation points. For completeness, we give a short overview of the method, before introducing the specific setup of Hamiltonian Vlasov. We remark that the method applies to all Hamiltonian PDEs if they fit into an appropriate topological framework, especially smooth group action is required. Among others, this is the case for phase multiplication in NLS, Hartree, and the Massive Thirring Model.

\subsection{On Hamiltonian PDEs}

Consider a symplectic phase space $(M,\omega)$, for the sake of simplicity assumed to be a linear space $M$ with constant symplectic form $\omega$, with a smooth Hamiltonian $H: M\to\RN$ and the associated Hamiltonian vector field $\HamVec{H}: M\to \tang{}{M}$ implicitly given by
\begin{equation}
\label{eqn:harmonic:ham-vec}
\Del{}H(\alpha)(\delta\alpha) = -\sympf{\HamVec{H}(\alpha)}{\delta\alpha} \quad \forall(\alpha,\delta\alpha)\in \tang{}{M},
\end{equation}
$\tang{}{M}$ denoting the tangent bundle. The Hamiltonian equation of motion is now given by
\begin{equation}
\label{eqn:harmonic:ham-eqn}
\partial_t\alpha(t)  = \HamVec{H}(\alpha(t)).
\end{equation}
Our main interest is to compute equilibria and families of periodic solutions around them. \\

\noindent Let us assume that there is a Lie group $G$, for simplicity assumed to be Abelian and finite-dimensional, acting smoothly on the phase space $(M,\omega)$, 
\begin{equation*}
G \times M \to M, \quad (g,\alpha) \mapsto g.\alpha,
\end{equation*}
leaving the symplectic form invariant, i.e.,
\begin{equation*}
\forall g\in G,~ \alpha,\beta\in M: \quad \sympf{g.\alpha}{g.\beta} = \sympf{\alpha}{\beta}.
\end{equation*}
This is sometimes denoted $G\subseteq\mathrm{Symp}(M)$. Also assume that $G$ is a continuous symmetry of the Hamiltonian, i.e.,
\begin{equation}
\label{eqn:harmonic:ham-symmetries}
\forall g\in G,~ \alpha\in M: \quad H(g.\alpha) = H(\alpha).
\end{equation}

\begin{exam}[Nonlinear Schrödinger equation]
The NLS equation fits into the given framework. The manifold $M=\Lp{2}{x}$ with symplectic form
\begin{equation*}
\sympf{u}{v} \equiv \int_{\RN} u~ \bar{v}~ \intd x,
\end{equation*}
group action
\begin{equation*}
\Sph{1} \times \Lp{2}{x} \to \Lp{2}{x}, \quad (\zeta,u) \mapsto \zeta~ u,
\end{equation*}
and Hamiltonian functional
\begin{equation*}
\HF(u) \equiv \frac{1}{2} \int_{\RN} \left(\abs{u_x}^2 + \frac{1}{2} \abs{u}^4\right)~ \intd x.
\end{equation*}
The Hamiltonian formalism yields the well-known NLS equation
\begin{equation*}
\mi~ u_t = -u_{xx} + \abs{u}^2 u.
\end{equation*}
The symplectic form and the Hamiltonian are also invariant under translation in $x$, but this group action is not smooth, only continuous.
\end{exam}

\noindent The existence of such a symmetry group $G$ raises two major concerns for our analysis. Firstly, it embeds any equilibrium into a continuous family, as shifting by group elements yields more equilibria of the same type. This causes technical problems especially for bifurcation theory, as these families contribute to the kernel of $\Del{2}H$, disallowing to solve 
\begin{equation*}
\lambda~ \partial_t\alpha(t) - \HamVec{H}(\alpha(t)) = 0, \quad \alpha: \RN/\IN \to M, \quad \lambda\in\RNp,
\end{equation*}
locally and thereby prove existence of periodic families for certain frequencies $\lambda$. 

On the other hand, the symmetry operations often are of no physical interest, but can easily destroy periodicity. For example, global $\Sph{1}$ phase oscillation often is not relevant as it does not change the profile of the wave package. The same holds for profiles travelling at constant speed, such as solitons in NLS.

The symplectic symmetry reduction solves both these problems, removing the degeracies of equilibria arising from the action of $G$ and dividing out the non-relevant motion along trajectories of the group action. The newly generated equilibria in the quotient system are then referred to as \textit{relative equilibria}. They still carry all qualitatively interesting information. \\

\noindent The process of symmetry reduction in practice boils down to the following steps.
\begin{enumerate}[label=(\roman*)]
\item From equations \eqref{eqn:harmonic:ham-vec}, \eqref{eqn:harmonic:ham-eqn}, and \eqref{eqn:harmonic:ham-symmetries} one computes the full set of conserved quantities implied by this group (Noether's Theorem). The number is equal to the (real) dimension $q$ of the group $G$. The resulting map
\begin{equation*}
\mathcal{C}: M \to \RN^q.
\end{equation*}
is usually referred to as \textit{moment map}.
\item From the conservation laws, we know that trajectories of \eqref{eqn:harmonic:ham-eqn} remain in level sets of $\mathcal{C}$. Therefore it suffices to restrict ourselves to the level set $\inv{\mathcal{C}}(\bc)$ of a regular value $\bc\in\RN^q$, enforcing it to carry the structure of a manifold.
\item The Marsden-Weinstein reduction now states that there is a smooth quotient map
\begin{equation*}
\Pi: \inv{\mathcal{C}}(\bc) \to \inv{\mathcal{C}}(\bc)/G
\end{equation*}
contracting orbits of the group action, such that $\inv{\mathcal{C}}(\bc)/G$ has a symplectic form $\Omega$, that satisfies for the embedding $\iota: \inv{\mathcal{C}}(\bc) \to M$
\begin{equation*}
\iota^*\omega = \Pi^*\Omega.
\end{equation*}
\item Finally, if $\bar{H}: \inv{\mathcal{C}}(\bc)/G \to \RN$ is chosen to satisfy $H\circ\iota=\bar{H}\circ\Pi$,
\begin{equation*}
\begin{tikzcd}[ampersand replacement=\&]
\inv{\mathcal{C}}(\bc)
\arrow[r, rightarrow, shift left, "\Pi"]
\arrow[d, rightarrow, shift left, "\iota"]
\& 
\left(\inv{\mathcal{C}}(\bc)/G, \Omega\right)
\arrow[d, rightarrow, shift left, "\bar{H}"]
\\
\left(M,\omega\right)
\arrow[r, rightarrow, shift left, "H"]
\&
\RN,
\end{tikzcd}
\end{equation*}
and $\HamVec{\bar{H}}$ is its Hamiltonian vector field w.r.t. $\Omega$, then
\begin{equation*}
\forall\alpha\in \inv{\mathcal{C}}(\bc): \quad \intd\Pi_\alpha(\HamVec{H}(\alpha)) = \HamVec{\bar{H}}(\Pi\alpha).
\end{equation*}
This key relation indicates how to compute the new Hamiltonian vector field locally in charts of the quotient manifold which is otherwise quite challenging in applications. It also shows that motion purely governed by symmetry transformations of $G$ is canceled, yielding new \textit{relative equilibria}, not even detectable in the unreduced system. In charts around these points it is now possible to use bifurcation theory in search of \textit{relatively periodic families}.
\end{enumerate}

\subsection{Vlasov as a Hamiltonian PDE}

While the structural approach of the previous section is very general, we want to study the special case of the Vlasov system in its Hamiltonian form. While the Hamiltonian formalism is widely used to understand equations such as NLS or Hartree, this seems not yet the case for Vlasov. However, the concepts of this section are just as applicable to those other equations. \\

\noindent Let us consider a classical Vlasov system with an energy functional $\HF(f)\in\RN$ for suitable functions $f\in\Lp{1}{\bz}$, $\bz=(\bx,\bv)\in\RZ=\RX\times\RV$. In the special case of a kinetic energy $\epsilon: \RV\to\RN$ and a two-body interaction with potential $\Gamma: \RX\to\RN$, one might think of
\begin{equation}
\label{eqn:harmonic:h-kinetic-twobody}
\HF(f) \equiv \int_{\RZ} \epsilon\left(\bv\right)~ f(\bx,\bv)~ \intd(\bx,\bv) + \frac{1}{2} \int_{\RZ\times\RZ} f(\bz_1)~ \Gamma(\bx_1-\bx_2)~ f(\bz_2)~ \intd(\bz_1,\bz_2).
\end{equation}
As derived in \cite{neiss,neisspickl}, one can find a corresponding Hamiltonian equation on complex-valued functions $\alpha\in\Lp{2}{\bz}$, where the inner product is
\begin{equation*}
\Re\SP{\alpha_1}{\alpha_2} = \Re\int_{\RZ} \alpha_1(\bz)~ \bar{\alpha}_2(\bz)~ \intd\bz,
\end{equation*}
$\Re$ denoting the real part, and with symplectic structure $\mi\in\CN$, the symplectic form is
\begin{equation*}
\sympf{\alpha_1}{\alpha_2} \equiv \Re\SP{\alpha_1}{\mi\alpha_2} = \Im\SP{\alpha_1}{\alpha_2}.
\end{equation*}
The dynamics on $\Lp{2}{\bz}$ as a symplectic vector space can now be constructed with the new Vlasov Hamiltonian
\begin{equation}
\label{eqn:harmonic:vlasov-hamiltonian}
\HVF(\alpha) \equiv \frac{1}{2\mi} \Del{}\HF\left(\abs{\alpha}^2\right) \left(\poisson{\bar{\alpha}}{\alpha}\right),
\end{equation}
$\poisson{\alpha}{\beta}\equiv \naX\alpha\cdot\naV\beta-\naV\alpha\cdot\naX\beta$ denoting the standard Poisson bracket on $\RX\times\RV$. 
The Hamiltonian vector field $\HamVec{\HVF}$ associated to this Hamiltonian, is (if it exists) implicitly defined by
\begin{equation*}
\Del{}\HVF(\alpha)(\delta\alpha) = -\sympf{\HamVec{\HVF}(\alpha)}{\delta\alpha}.
\end{equation*}
The corresponding equation of motion then is
\begin{equation}
\label{eqn:harmonic:hvl} \tag{HVl}
\partial_t\alpha(t) =~ \HamVec{\HVF}(\alpha(t)),
\end{equation}
and in the special case of \eqref{eqn:harmonic:h-kinetic-twobody}, we find
\begin{align*}
\partial_t\alpha(t) =&~ \symp{\epsilon(\bv) + \convol{\Gamma}{\abs{\alpha(t)}^2}(\bx)}{\alpha(t)} - \convol{\Gamma}{\symp{\bar{\alpha}(t)}{\alpha(t)}}~ \alpha(t) \\
=&~ -(\naV\epsilon)(\bv)\cdot \naX\alpha(t) + \convol{\nabla\Gamma}{\abs{\alpha(t)}^2}(\bx)\cdot \naV\alpha(t) - \convol{\Gamma}{\poisson{\bar{\alpha}(t)}{\alpha(t)}}(\bx)~ \alpha(t)
\end{align*}
It is a general fact, that solutions $t\mapsto \alpha(t)$ then yield solutions $t\mapsto f(t)\equiv \abs{\alpha(t)}^2$ of the underlying Vlasov system \cite[Prop.1.1]{neiss}. \\

\noindent If the system energy is structured as in \eqref{eqn:harmonic:h-kinetic-twobody}, then there is a number of general continuous symmetries. These symmetries naturally imply conserved quantities as an example of Noether's Theorem.

\begin{prop}[Continuous symmetries and Noether's Theorem]
\label{prop:harmonic:noether}
Any Vlasov Hamiltonian $\HVF$ structured like \eqref{eqn:harmonic:h-kinetic-twobody} is invariant under the continuous action of the $2\ddim+1$ dimensional Lie group $G=\Sph{1}\times \RX\times\RXi$ given by
\begin{align*}
G \times \Lp{2}{\bz} \to \Lp{2}{\bz}, \quad (g,\alpha) = \left(\left(\zeta,\bx_0,\xi_0\right),\alpha\right) \mapsto \left(g.\alpha\right)(\bx,\bv) \equiv \zeta~\alpha(\bx - \bx_0, \bv)~ \exp\left(\mi\bv\cdot \xi_0\right).
\end{align*}
This implies that any solution $t\mapsto \alpha(t)$ of \eqref{eqn:harmonic:hvl} conserves the $2\ddim+1$ quantities
\begin{align*}
t \mapsto \Lpn{\alpha(t)}{2}{\bz}^2, \quad t \mapsto \SP{\alpha(t)}{\mi~ \naX\alpha(t)}, \quad t\mapsto \SP{\alpha(t)}{\bv~ \alpha(t)},
\end{align*}
if they are defined.
\end{prop}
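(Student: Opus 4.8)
My plan is to prove the two assertions in turn: first the invariance \eqref{eqn:harmonic:ham-symmetries} of $\HVF$ under $G$, then the conservation of the three quantities as an instance of the Noether mechanism of step (i) applied to this invariance. Throughout I would work on a dense domain of smooth, rapidly decaying $\alpha$ (e.g.\ Schwartz class) on which all derivatives, integrations by parts, and the defining integral \eqref{eqn:harmonic:vlasov-hamiltonian} are legitimate; the qualifier ``if they are defined'' then lets the conservation laws propagate along the flow of \eqref{eqn:harmonic:hvl} wherever it exists. Inserting the first variation of \eqref{eqn:harmonic:h-kinetic-twobody}, I would write $\HVF(\alpha)=\frac{1}{2\mi}\int_\RZ\big(\epsilon(\bv)+\convol{\Gamma}{\abs{\alpha}^2}(\bx)\big)\,\poisson{\bar\alpha}{\alpha}\,\intd\bz$ and abbreviate the effective one-particle potential by $\phi_\alpha\equiv\epsilon(\bv)+\convol{\Gamma}{\abs{\alpha}^2}(\bx)$. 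Since $G$ is a direct product whose three factors act by mutually commuting transformations, it suffices to check invariance under each factor separately.

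The phase and shift factors are immediate. For $\zeta\in\Sph{1}$ one has $\abs{\zeta\alpha}^2=\abs{\alpha}^2$, hence $\phi_{\zeta\alpha}=\phi_\alpha$, while bilinearity gives $\poisson{\overline{\zeta\alpha}}{\zeta\alpha}=\abs{\zeta}^2\poisson{\bar\alpha}{\alpha}=\poisson{\bar\alpha}{\alpha}$. For the shift by $\bx_0$ both $\phi_\alpha$ and $\poisson{\bar\alpha}{\alpha}$ are merely translated in $\bx$ — here one uses that $\naX$ commutes with translations and that $\convol{\Gamma}{\cdot}$ is translation-covariant — so the integral is unchanged after the substitution $\bx\mapsto\bx+\bx_0$.

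The boost is the one genuinely substantial case and I expect it to be the crux. Writing $\beta\equiv\alpha\,\exp(\mi\bv\cdot\xi_0)$, the modulus is untouched, so again $\phi_\beta=\phi_\alpha$; but the Poisson bracket acquires a correction, a short computation yielding $\poisson{\bar\beta}{\beta}=\poisson{\bar\alpha}{\alpha}+\mi\,\xi_0\cdot\naX\abs{\alpha}^2$. Invariance therefore reduces to the vanishing of $\int_\RZ\phi_\alpha\,\naX\abs{\alpha}^2\,\intd\bz$. The kinetic contribution vanishes trivially because $\int_\RX\naX\abs{\alpha}^2\,\intd\bx=0$. For the interaction contribution I would integrate out $\bv$, set $\rho(\bx)\equiv\int\abs{\alpha(\bx,\bv)}^2\,\intd\bv$, integrate by parts once to convert $\convol{\Gamma}{\abs{\alpha}^2}\,\naX\rho$ into a double integral against the kernel $(\nabla\Gamma)(\bx-\bx')$, and then use the antisymmetry of that kernel under $\bx\leftrightarrow\bx'$ against the symmetric weight $\rho(\bx)\rho(\bx')$. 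This step relies on the fact that only the even part of $\Gamma$ enters \eqref{eqn:harmonic:h-kinetic-twobody} (relabelling $\bz_1\leftrightarrow\bz_2$ symmetrises the double integral), so one may assume $\Gamma$ even and $\nabla\Gamma$ odd; isolating this evenness hypothesis together with the decay needed for the integration by parts is the main technical point.

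For the conserved quantities I would invoke Noether: the invariance just established forces each moment-map component $C$, defined by $\Del{}C(\cdot)=-\sympf{Y}{\cdot}$ for the fundamental vector field $Y$ of a subgroup, to be constant along solutions of \eqref{eqn:harmonic:hvl}. Because $M=\Lp{2}{\bz}$ is linear with constant form $\sympf{\cdot}{\cdot}=\Im\SP{\cdot}{\cdot}$, each $C$ is recovered by a one-line integration by parts. Differentiating the subgroups at the identity gives the generators $Y=\mi\alpha$ (phase), $Y=-\naX\alpha$ (shift, componentwise), and $Y=\mi\,\bv\,\alpha$ (boost); solving $\Del{}C(\cdot)=-\sympf{Y}{\cdot}$ then yields, up to an irrelevant sign and factor, $C=\Lpn{\alpha}{2}{\bz}^2$, $C=\SP{\alpha}{\mi\naX\alpha}$, and $C=\SP{\alpha}{\bv\,\alpha}$ respectively. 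The only remaining points are that the latter two are real-valued — which follows from $\int_\RX\naX\abs{\alpha}^2\,\intd\bx=0$ and the reality of $\bv$ — thereby identifying them with the three asserted conserved quantities.
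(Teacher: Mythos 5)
Your proposal is correct and follows essentially the same route as the paper: invariance is checked factor by factor, with the boost $e^{\mi\bv\cdot\xi_0}$ producing exactly the Poisson-bracket correction $\mi\,\xi_0\cdot\naX\abs{\alpha}^2$ that the paper computes, and conservation then follows from the standard symplectic Noether mechanism with the same generators and moment-map components that the paper records in Remark \ref{rmk:harmonic:moment-map} (the paper merely inlines this computation separately for each quantity instead of citing the general principle). If anything, you are more complete than the paper on the one non-trivial point: where the paper only asserts that the correction term ``can be seen to not contribute'' to either energy term, you actually prove it --- the divergence theorem for the kinetic part, and for the interaction part an integration by parts plus the oddness of $\nabla\Gamma$ against the symmetric weight $\rho(\bx)\rho(\bx')$ after reducing to even $\Gamma$.
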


\begin{proof}
\textbf{(i) Constant phase invariance and mass conservation.} The phase invariance of $\HVF$ is immediate from \eqref{eqn:harmonic:vlasov-hamiltonian}. Let $t\mapsto \alpha(t)$ solve \eqref{eqn:harmonic:hvl}, then
\begin{align*}
\frac{1}{2}~ \partial_t \Lpn{\alpha(t)}{2}{\bz}^2 =&~ \Re\SP{\dot{\alpha}(t)}{\alpha(t)} = \Re\SP{\HamVec{\HVF}(\alpha(t))}{\alpha(t)} = \sympf{\HamVec{\HVF}(\alpha(t))}{\mi\alpha(t)} \\
\stackrel{\text{Def.}}{=}&~ -\Del{}\HVF(\alpha(t))(\mi\alpha(t)) = -\left.\frac{\intd}{\intd\tau}\right|_{\tau=0} \HVF\left(\left(e^{\mi\tau},\bn,\bn\right).\alpha(t)\right) = 0.
\end{align*} \\

\noindent \textbf{(ii) Translation invariance and pseudo momentum conservation.} The translation invariance in $\bx$ is also immediate from \eqref{eqn:harmonic:h-kinetic-twobody}. For any $1\leq i\leq \ddim$ one computes
\begin{align*}
\frac{1}{2} \partial_t\SP{\alpha(t)}{\mi~ \partial_{x_i}\alpha(t)} 
=&~ \Re\SP{\dot{\alpha}(t)}{\mi~ \partial_{x_i}\alpha(t)} 
= \sympf{\HamVec{\HVF}(\alpha(t))}{\partial_{x_i}\alpha(t)} \\
\stackrel{\text{Def.}}{=}&~ - \Del{}\HVF(\alpha(t))(\partial_{x_i}\alpha(t)) = \left.\frac{\intd}{\intd\tau}\right|_{\tau=0} \HVF((1,\tau\be_i,\bn).\alpha(t)) = 0.
\end{align*} \\

\noindent\textbf{(iii) Linear phase invariance and linear momentum conservation.} This symmetry is not directly obvious, though its implication, conservation of linear momentum, is well known. One computes for $\xi_0\in\RXi$
\begin{align*}
\symp{e^{-\mi \bv\cdot\xi_0}\bar{\alpha}}{e^{\mi \bv\cdot\xi_0}\alpha} - \symp{\bar{\alpha}}{\alpha} = \symp{\bar{\alpha}}{\mi \bv\cdot\xi_0} \alpha + \symp{-\mi \bv\cdot\xi_0}{\alpha} \bar{\alpha} = \mi \symp{\abs{\alpha}^2}{ \bv\cdot\xi_0} = \mi~\naX\abs{\alpha}^2 \cdot\xi_0,
\end{align*}
which can be seen to not contribute, neither to the kinetic nor the potential energy term of $\HVF$. The associated conserved quantity is derived as in (i) and (ii).
\end{proof}

\begin{rmk}
\textbf{(i).} For an $\mathrm{SO}(\ddim)$ invariant kinetic energy $\epsilon$ and interaction potential $\Gamma$, also 
\begin{equation*}
t \mapsto \SP{\alpha}{\left(x_i~ \mi~\partial_{x_j} - x_j~ \mi~\partial_{x_i}\right) \alpha}, \quad 1\leq i < j \leq \ddim,
\end{equation*}
are Noether conjugate conserved quantities. \\

\noindent\textbf{(ii).} Opposed to classical mechanics, momentum conservation here is not a consequence of translation invariance, but rather implied by linear phase invariance. This linear phase invariance is actually turns into translation invariance in the Fourier conjugate of the velocity coordinates.
\end{rmk}

\begin{rmk}[Moment map]
\label{rmk:harmonic:moment-map}
In the geometric literature, the key quantity for the symmetry reduction principle is the moment map. It is usually defined as a map $M\to\mathfrak{g}^*$ with values in the dual space of the Lie algebra $\mathfrak{g}=\tang{1_G}{G}$.

We want to show that in our application the moment map is equivalent to the conserved quantities from the previous Proposition. Given $G=\Sph{1}\times\RX\times\RXi$, define for any $(s,\by,\eta)\in\RN\times\RX\times\RXi=\mathfrak{g}$ the vector field
\begin{equation*}
X_{(s,\by,\eta)}(\alpha) \equiv \left.\del{}{\tau}\right|_{\tau=0} \exp\left(\tau(s,\by,\eta)\right).\alpha = \left(\mi s + \by\cdot\naX + \mi \eta\cdot\bv\right)\alpha
\end{equation*}
given through the derivative of the exponential map $\exp: \mathfrak{g} \to G$. In this setup, each of these vector fields is Hamiltonian in the sense, that it is generated by a real valued function on (a dense subspace of) $\Lp{2}{\bz}$, explicitly given by
\begin{equation*}
- \sympf{X_{(s,\by,\eta)}(\alpha)}{\delta\alpha} = \Re\SP{s~ \alpha + \by \cdot \mi\naX\alpha + \eta\cdot\bv~ \alpha}{\delta\alpha} = \Del{}\mathcal{C}_{(s,\by,\eta)}(\alpha)(\delta\alpha)
\end{equation*}
for
\begin{equation*}
\mathcal{C}_{(s,\by,\eta)}(\alpha) = \frac{s}{2} \Lpn{\alpha}{2}{\bz}^2 + \frac{\by}{2} \cdot \SP{\alpha}{\mi\naX\alpha} + \frac{\eta}{2} \cdot \SP{\alpha}{\bv\alpha} = 
\frac{1}{2} \left(\begin{array}{c}
\Lpn{\alpha}{2}{\bz}^2 \\ \SP{\alpha}{\mi~\naX\alpha} \\ \SP{\alpha}{\bv~ \alpha}
\end{array}\right) \cdot \left(\begin{array}{c}
s \\ \by \\ \eta
\end{array}\right),
\end{equation*}
which for every $\alpha$ is linear in $(s,\by,\eta)$ and naturally defines an elements in $\mathfrak{g}^*$. Hence, under a linear isomorphism $\mathfrak{g}^*\simeq\RN^{2\ddim+1}$, a vector of the conserved quantities from Proposition \ref{prop:harmonic:noether} equals the moment map, justifying our choice of notation.
\end{rmk}

\noindent At first sight, the phase invariance is an unwanted degeneracy, because it does not seem to reflect physical properties of the classical Vlasov system. All phase information will be lost upon the mapping $\alpha\mapsto\abs{\alpha}^2$ anyway. 

Nevertheless, it is not artificial at all since, in this Hamiltonian formulation, it provides two continuous symmetries that are the Noether conjugates of mass and momentum conservation. \\

\noindent Following the guidelines of Marsden-Weinstein reduction, it is desirable to cancel out the group action, which in the dynamics corresponds just to phase oscillation ($\ddim+1$) and translation ($\ddim$). The huge technical problem here is that the group action is not smooth. Smoothness is required to secure the quotient space to be a smooth manifold and to define its differentiable structure. It seems that this obstruction is not easily removed. From the structure of the derivative it is also obvious that this is not possible for a space of non-smooth functions.

Here, we restrict ourselves to applying the reduction only with respect to global phase multiplication which is smooth. Clearly, the global phase multiplication is invisible in the classical Vlasov picture after mapping solutions under $\alpha\mapsto\abs{\alpha}^2$.

\begin{prop}[Phase equivariant symplectic reduction]
\label{prop:harmonic:pe-symplecticity}
Consider the symplectic vector space $\left(\Lp{2}{\bz},\omega\right)$ with the group action
\begin{equation*}
\Sph{1} \times \Lp{2}{\bz} \to \Lp{2}{\bz}, \quad (\zeta,\alpha) \mapsto \zeta~ \alpha.
\end{equation*}
Given its associated moment map
\begin{equation*}
\mathcal{C}: \Lp{2}{\bz} \mapsto \RN, \quad \alpha \mapsto \frac{1}{2}\Lpn{\alpha}{2}{\bz}^2,
\end{equation*}
$1/2\in\RN$ is a regular value. By Marsden-Weinstein reduction, there is a unique symplectic form $\bar{\omega}$ on the smooth Hilbert manifold $\Sph{\Lp{2}{\bz}}/\Sph{1}$, s.t. under the embedding and projection maps
\begin{equation*}
\iota: \Sph{\Lp{2}{\bz}} \to \Lp{2}{\bz} \quad \text{and} \quad \Pi: \Sph{\Lp{2}{\bz}} \to \Sph{\Lp{2}{\bz}}/\Sph{1}
\end{equation*}
the relation
\begin{equation*}
\iota^*\omega = \Pi^*\bar{\omega}
\end{equation*}
holds. Also, given a $\Sph{1}$-invariant Hamiltonian $\HF$ on $\Lp{2}{\bz}$ and its push-forward $\bar{\HF}$ on $\Sph{\Lp{2}{\bz}}/\Sph{1}$, the respective Hamiltonian vector fields satisfy
\begin{equation*}
\forall \alpha\in\Sph{\Lp{2}{\bz}}: \quad
\HamVec{\bar{\HF}}(\Pi\alpha) = \left(\Pi_*\HamVec{\HF}\right)(\Pi\alpha) = \intd\Pi_\alpha(\HamVec{\HF}(\alpha)).
\end{equation*}
\end{prop}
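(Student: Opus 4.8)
The plan is to confirm that the flat-space data of the proposition satisfy the hypotheses of the abstract reduction scheme recalled above, and then to reduce the whole statement to one short bilinear computation, the identification of the null distribution of $\iota^*\omega$. First I would verify that $1/2$ is a regular value of $\mathcal{C}$: since $\Del{}\mathcal{C}(\alpha)(\delta\alpha)=\Re\SP{\alpha}{\delta\alpha}$, this is a nonzero bounded real-linear functional whenever $\alpha\neq0$, hence surjective onto $\RN$ with closed, topologically complemented kernel; on $\mathcal{C}^{-1}(1/2)$ one has $\Lpn{\alpha}{2}{\bz}=1$, so $\alpha\neq0$ throughout, and $\Sph{\Lp{2}{\bz}}=\mathcal{C}^{-1}(1/2)$ is a smooth Hilbert submanifold of codimension one with $\tang{\alpha}{\Sph{\Lp{2}{\bz}}}=\{\delta\alpha:\Re\SP{\alpha}{\delta\alpha}=0\}$. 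The phase action restricts to this sphere; it is smooth, free (as $\zeta\alpha=\alpha$ with $\alpha\neq0$ forces $\zeta=1$), and proper because $\Sph{1}$ is compact, with infinitesimal generator $\alpha\mapsto\mi\alpha$. Hence $\Sph{\Lp{2}{\bz}}/\Sph{1}$ carries a smooth Hilbert manifold structure, $\Pi$ is a surjective submersion exhibiting a principal $\Sph{1}$-bundle, and $\ker\intd\Pi_\alpha=\RN\,\mi\alpha$.

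The computation at the heart of the matter is that $\iota^*\omega$ is \emph{basic}, i.e.\ invariant and horizontal. Invariance is immediate, as phase multiplication is $\CN$-linear and unitary and therefore preserves $\omega$. For horizontality I compute, for every $\delta\alpha\in\tang{\alpha}{\Sph{\Lp{2}{\bz}}}$,
\begin{equation*}
\sympf{\mi\alpha}{\delta\alpha}=\Im\SP{\mi\alpha}{\delta\alpha}=\Re\SP{\alpha}{\delta\alpha}=0,
\end{equation*}
so the vertical generator $\mi\alpha$ lies in the null space of $\iota^*\omega$; conversely, non-degeneracy of $\omega$ on $\Lp{2}{\bz}$ combined with the double-orthogonal-complement identity in the real Hilbert space $(\Lp{2}{\bz},\Re\SP{\cdot}{\cdot})$ shows the null space is \emph{exactly} $\RN\,\mi\alpha=\ker\intd\Pi_\alpha$. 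Being invariant and annihilating the vertical distribution, $\iota^*\omega$ descends through the submersion $\Pi$ to a unique two-form $\bar\omega$ with $\Pi^*\bar\omega=\iota^*\omega$. Closedness follows from $\Pi^*\intd\bar\omega=\intd\iota^*\omega=\iota^*\intd\omega=0$ together with injectivity of $\Pi^*$, and non-degeneracy at $\Pi\alpha$ is precisely the statement that $\iota^*\omega$ induces a non-degenerate pairing on $\tang{\alpha}{\Sph{\Lp{2}{\bz}}}/\RN\,\mi\alpha$, which holds because we have quotiented by exactly the null space.

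For the Hamiltonian vector field identity I would first observe that $\Sph{1}$-invariance of $\HF$ gives $\Del{}\HF(\alpha)(\mi\alpha)=0$, so by the defining relation $\sympf{\HamVec{\HF}(\alpha)}{\mi\alpha}=0$, which is exactly the condition $\HamVec{\HF}(\alpha)\in\tang{\alpha}{\Sph{\Lp{2}{\bz}}}$; thus $\intd\Pi_\alpha(\HamVec{\HF}(\alpha))$ is defined. Then for any $\bar\eta=\intd\Pi_\alpha(\eta)$ with $\eta\in\tang{\alpha}{\Sph{\Lp{2}{\bz}}}$ I chase the defining relation on the quotient: from $\bar{\HF}\circ\Pi=\HF\circ\iota$ one gets $\Del{}\bar{\HF}(\Pi\alpha)(\bar\eta)=\Del{}\HF(\alpha)(\eta)=-\sympf{\HamVec{\HF}(\alpha)}{\eta}$, while $\Pi^*\bar\omega=\iota^*\omega$ gives $-\bar\omega\big(\intd\Pi_\alpha(\HamVec{\HF}(\alpha)),\bar\eta\big)=-\sympf{\HamVec{\HF}(\alpha)}{\eta}$. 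As these agree for all $\bar\eta$, non-degeneracy of $\bar\omega$ forces $\HamVec{\bar{\HF}}(\Pi\alpha)=\intd\Pi_\alpha(\HamVec{\HF}(\alpha))$, which is the claim.

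I expect the genuine obstacle to be not the diagram chase but the infinite-dimensional bookkeeping that legitimizes it: one must check that $\bar\omega$ is \emph{strongly} (not merely weakly) non-degenerate, so that $\HamVec{\bar{\HF}}$ is truly well-defined, and that all subspaces involved split topologically. This is exactly where the special structure is used: multiplication by $\mi$ is a bounded complex structure and $\Re\SP{\cdot}{\cdot}$ identifies $\Lp{2}{\bz}$ with its dual, so $\omega$ is strong symplectic on the flat space, and both properties are inherited under restriction to the codimension-one sphere and passage to the compact-group quotient.
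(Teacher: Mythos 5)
Your proof is correct, but it takes a genuinely different route from the paper. The paper's proof is a short verification-plus-citation: the action is smooth, free and proper (compactness of $\Sph{1}$), it preserves $\omega$, the moment map is the one constructed in Remark \ref{rmk:harmonic:moment-map}, and then both the existence/uniqueness of $\bar{\omega}$ and the vector field identity $\HamVec{\bar{\HF}}(\Pi\alpha) = \intd\Pi_\alpha(\HamVec{\HF}(\alpha))$ are quoted directly from \cite[Thm.1]{marsdenweinsteinreduction} and \cite[Cor.3]{marsdenweinsteinreduction}. You instead re-prove the reduction theorem from scratch in this concrete linear setting: you identify the null space of $\iota^*\omega$ at $\alpha$ as exactly $\RN\,\mi\alpha = \ker\intd\Pi_\alpha$ (your computation $\sympf{\mi\alpha}{\delta\alpha} = \Re\SP{\alpha}{\delta\alpha}$ and the double-orthogonal-complement argument are both correct under the paper's convention $\sympf{\alpha_1}{\alpha_2} = \Im\SP{\alpha_1}{\alpha_2}$), descend the basic form through the principal bundle, and then chase the defining relations of the Hamiltonian vector fields through $\Pi$, using $\Sph{1}$-invariance of $\HF$ to get tangency of $\HamVec{\HF}$ to the sphere (equivalently, conservation of $\mathcal{C}$) so that $\intd\Pi_\alpha(\HamVec{\HF}(\alpha))$ is even defined. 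What your route buys: it is self-contained, it dispenses with checking that the infinite-dimensional hypotheses of the cited abstract theorem (submersion, splitting, strong non-degeneracy) actually hold in this Hilbert setting --- a point your final paragraph rightly flags and the paper leaves implicit --- and it produces as by-products the explicit descriptions of $\tang{}{\Sph{\Lp{2}{\bz}}}$, of $\intd\Pi_\alpha$, and of $\bar{\omega}$ that the paper must separately record in Lemma \ref{lem:harmonic:coordinate-representations}. What the paper's route buys: brevity, and an argument that transfers verbatim to any symmetry group fitting the general framework of Section \ref{sec:harmonic:symplectic-reduction}, rather than exploiting the special linear--unitary structure of the $\Sph{1}$ action on $\Lp{2}{\bz}$.
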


\begin{proof}
The group action is smooth, free, and proper (since $\Sph{1}$ is compact). It is also symplectic as it leaves the symplectic form $\omega$ invariant. The moment map is constructed as in Remark \ref{rmk:harmonic:moment-map}. The rest is straightforward application of the reduction theorem \cite[Thm.1]{marsdenweinsteinreduction}. The claim on the Hamiltonian vector fields is just an application of \cite[Cor.3]{marsdenweinsteinreduction}.
\end{proof}

\noindent With explicit computations in mind, it is useful to give coordinate representations for the manifolds and important maps in between.

\begin{lem}[Coordinate representations]
\label{lem:harmonic:coordinate-representations}
We have the following explicit coordinate representations:
\begin{enumerate}[label=(\roman*)]
\item $\tang{}{\Sph{\Lp{2}{\bz}}}$ can be parametrized as a subset of $\Lp{2}{\bz}$ by
\begin{equation*}
\tang{}{\Sph{\Lp{2}{\bz}}} \simeq \left\{(\alpha,\delta\alpha)\in \left(\Lp{2}{\bz}\right)^2 : \Lpn{\alpha}{2}{\bz}=1, \Re\SP{\alpha}{\delta\alpha}=0 \right\},
\end{equation*}
\item its quotient space $\tang{}{\left(\Sph{\Lp{2}{\bz}}/\Sph{1}\right)}$ by dividing out the compact Lie group $\Sph{1}$ allows the natural chart
\begin{equation*}
\tang{}{\left(\Sph{\Lp{2}{\bz}}/\Sph{1}\right)} \simeq \left\{\begin{array}{c}
\Sph{1}(\alpha, \delta\alpha) \in \left(\Sph{\Lp{2}{\bz}} \times \Lp{2}{\bz}\right)/\Sph{1} : \\ \SP{\alpha}{\delta\alpha} = 0 \end{array}\right\},
\end{equation*}
\item yielding for the projection map
\begin{equation*}
\Pi: \begin{array}{ccc}
\tang{}{\Sph{\Lp{2}{\bz}}} & \to & \tang{}{\left(\Sph{\Lp{2}{\bz}}/\Sph{1}\right)} \\
\left\{\begin{array}{c}
(\alpha,\delta\alpha)\in \left(\Lp{2}{\bz}\right)^2 : \\
\Lpn{\alpha}{2}{\bz}=1, \Re\SP{\alpha}{\delta\alpha}=0 \end{array}\right\} & \to & \left\{\begin{array}{c}
\Sph{1}(\alpha, \delta\alpha) \in \left(\Sph{\Lp{2}{\bz}} \times \Lp{2}{\bz}\right)/\Sph{1} : \\ \SP{\alpha}{\delta\alpha} = 0 \end{array}\right\} \\ & & \\
(\alpha,\delta\alpha) & \mapsto & (\Pi\alpha,\intd\Pi_\alpha\delta\alpha) = \Sph{1}\left(\alpha, \delta\alpha - \SP{\delta\alpha}{\alpha}\alpha\right).
\end{array}
\end{equation*}
\item Finally, the symplectic form $\bar{\omega}$ on $\Sph{\Lp{2}{\bz}}/\Sph{1}$ is given by
\begin{equation*}
\bar{\omega}: \begin{array}{ccc}
\left\{\begin{array}{c}
\Sph{1}(\alpha, \delta\alpha_1,\delta\alpha_2) \in \left(\Sph{\Lp{2}{\bz}} \times \left(\Lp{2}{\bz}\right)^2\right)/\Sph{1} : \\ \SP{\alpha}{\delta\alpha_1} = \SP{\alpha}{\delta\alpha_2} = 0 \end{array}\right\} & \to & \RN \\ & & \\
\Sph{1}(\alpha, \delta\alpha_1, \delta\alpha_2) & \mapsto & \Im\SP{\delta\alpha_1}{\delta\alpha_2}.
\end{array}
\end{equation*}
\end{enumerate}
\end{lem}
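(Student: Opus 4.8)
The plan is to obtain all four representations from two standard ingredients: the description of the tangent space of a codimension-one level set, and the principal-bundle structure of $\Pi:\Sph{\Lp{2}{\bz}}\to\Sph{\Lp{2}{\bz}}/\Sph{1}$ already furnished by Proposition~\ref{prop:harmonic:pe-symplecticity}. For (i) I regard $\Lp{2}{\bz}$ as a real Hilbert space with inner product $\Re\SP{\cdot}{\cdot}$; the sphere is the level set $\{\mathcal{C}=1/2\}$ of the moment map $\mathcal{C}(\alpha)=\tfrac{1}{2}\Lpn{\alpha}{2}{\bz}^2$, which is a submersion away from $0$, so that $\tang{\alpha}{\Sph{\Lp{2}{\bz}}}=\ker\Del{}\mathcal{C}(\alpha)$. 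Differentiating the constraint gives $\Del{}\mathcal{C}(\alpha)(\delta\alpha)=\Re\SP{\alpha}{\delta\alpha}$, whence the real-orthogonality condition $\Re\SP{\alpha}{\delta\alpha}=0$, which is exactly (i).

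For (ii) and (iii) the key observation is that the infinitesimal generator of the action $\zeta.\alpha=\zeta\alpha$ at a point $\alpha$ is $\left.\tfrac{\intd}{\intd\theta}\right|_{\theta=0}e^{\mi\theta}\alpha=\mi\alpha$, so the tangent space to the group orbit (the vertical space) is the real line $\RN\,\mi\alpha$. Since the tangent space of the quotient at $\Pi\alpha$ is $\tang{\alpha}{\Sph{\Lp{2}{\bz}}}$ modulo this vertical line, I choose as horizontal complement the real-orthogonal complement of $\mi\alpha$ inside $\tang{\alpha}{\Sph{\Lp{2}{\bz}}}$. A short computation shows $\Re\SP{\mi\alpha}{\delta\alpha}=-\Im\SP{\alpha}{\delta\alpha}$, so the horizontal vectors are exactly those satisfying both $\Re\SP{\alpha}{\delta\alpha}=0$ and $\Im\SP{\alpha}{\delta\alpha}=0$, i.e.\ the single complex condition $\SP{\alpha}{\delta\alpha}=0$; this gives the chart in (ii). The differential $\intd\Pi_\alpha$ is then the real-orthogonal projection onto this horizontal space, i.e.\ subtraction of the vertical component along $\mi\alpha$. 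Using $\Lpn{\alpha}{2}{\bz}=1$ and that $\SP{\delta\alpha}{\alpha}$ is purely imaginary on $\tang{\alpha}{\Sph{\Lp{2}{\bz}}}$, I would verify $\SP{\delta\alpha}{\alpha}\alpha=\Im\SP{\delta\alpha}{\alpha}\,\mi\alpha$, so that removing the vertical part yields precisely $\delta\alpha-\SP{\delta\alpha}{\alpha}\alpha$, which is (iii); one checks directly that the result is complex-orthogonal to $\alpha$.

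For (iv) I invoke the defining relation $\iota^*\omega=\Pi^*\bar{\omega}$ from Proposition~\ref{prop:harmonic:pe-symplecticity}. Evaluating on horizontal representatives $\delta\alpha_1,\delta\alpha_2$ (those with $\SP{\alpha}{\delta\alpha_j}=0$), the map $\intd\Pi_\alpha$ acts as the identity on representatives, so $\bar{\omega}\bigl(\intd\Pi_\alpha\delta\alpha_1,\intd\Pi_\alpha\delta\alpha_2\bigr)=\sympf{\delta\alpha_1}{\delta\alpha_2}=\Im\SP{\delta\alpha_1}{\delta\alpha_2}$, which is the claimed formula. It then remains to confirm that this expression is well defined on the $\Sph{1}$-orbit, i.e.\ invariant under $(\alpha,\delta\alpha_1,\delta\alpha_2)\mapsto(\zeta\alpha,\zeta\delta\alpha_1,\zeta\delta\alpha_2)$; this is immediate since $\Im\SP{\zeta\delta\alpha_1}{\zeta\delta\alpha_2}=\Im\bigl(\abs{\zeta}^2\SP{\delta\alpha_1}{\delta\alpha_2}\bigr)=\Im\SP{\delta\alpha_1}{\delta\alpha_2}$ for $\abs{\zeta}=1$.

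The tangent-space computations are routine; the one point demanding care — the main obstacle — is the passage to the quotient in (ii): I must check that the complex-orthogonality condition $\SP{\alpha}{\delta\alpha}=0$ really singles out a horizontal complement that maps isomorphically onto $\tang{\Pi\alpha}{\left(\Sph{\Lp{2}{\bz}}/\Sph{1}\right)}$ under $\intd\Pi_\alpha$, and that the resulting set-theoretic descriptions are compatible with the smooth structure guaranteed by Marsden--Weinstein. Concretely this amounts to verifying that $\RN\,\mi\alpha$ and $\{\SP{\alpha}{\delta\alpha}=0\}$ split $\tang{\alpha}{\Sph{\Lp{2}{\bz}}}$ as a direct sum and that the $\Sph{1}$-quotient of the horizontal bundle is exactly the chart displayed, which I would justify using that the action is free and proper as established in Proposition~\ref{prop:harmonic:pe-symplecticity}.
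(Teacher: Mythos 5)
Your proposal is correct: the paper itself offers no argument (its proof reads ``All claims are self-evident''), and your verification via the level-set description of $\tang{\alpha}{\Sph{\Lp{2}{\bz}}}$, the vertical space $\RN\,\mi\alpha$, the horizontal splitting characterized by $\SP{\alpha}{\delta\alpha}=0$, and the pullback relation $\iota^*\omega=\Pi^*\bar{\omega}$ is precisely the standard reasoning the paper treats as implicit. Your attention to the direct-sum splitting $\tang{\alpha}{\Sph{\Lp{2}{\bz}}}=\RN\,\mi\alpha\oplus\left\{\delta\alpha:\SP{\alpha}{\delta\alpha}=0\right\}$ and to the $\Sph{1}$-invariance of $\Im\SP{\delta\alpha_1}{\delta\alpha_2}$ supplies exactly the details needed to justify the lemma.
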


\begin{proof}
All claims are self-evident.
\end{proof}

\subsection{Remarks on bifurcation}

The main motivation behind the non-trivial transformations made in the previous section is to simplify the search for stationary points and bifurcating periodic solutions around them, as they are known to yield periodic solutions for the underlying classical Vlasov system. \\

\noindent The method explored has two notable advantages. Firstly, switching from the classical $\Lp{1}{}$ formulation to a Hamiltonian $\Lp{2}{}$ language largely enhances access to methods relying on spectral theory. This links the Vlasov systems to a rich field of periodic bifurcations previously applied to many Hamiltonian PDEs \cite{ambrosettiprodi}.

Secondly, cancelling the artificial symmetry of phase equivariance by projection onto the quotient manifold $\Sph{\Lp{2}{}}/\Sph{1}$ actually increases the potential number of stationary points and periodic solutions of Hamiltonian Vlasov, because pure uniform phase oscillation is now invisible to the dynamics. This has the great advantage that for any relative equilibrium on the quotient manifold, one can now try to solve the bifurcation equation
\begin{equation*}
\lambda~ \partial_t\alpha(t) - \HamVec{\bar{\HF}}(\alpha(t)) = 0, \quad \alpha: \RN/\IN \to \Sph{\Lp{2}{}}/\Sph{1}, \quad \lambda>0,
\end{equation*}
in a small neighborhood on the tangent bundle $T\left(\Sph{\Lp{2}{}}/\Sph{1}\right)$, locally linearized to a $\CN$-codimension 1 subspace of $\Lp{2}{}$. Taking care of the phase oscillation degeneracy on the full $\Lp{2}{}$ space is topologically complicated.

\section{Example: Harmonic Vlasov system}
\label{sec:harmonic:harmonic-vlasov}

As a toy model, where the developed toolbox works almost to perfection, we want to classify the periodic solutions of the \textit{Harmonic Vlasov system}. This is the system of non-relativistic motion with attractive harmonic two-body interaction potential
\begin{equation*}
\Gamma(\bx) \equiv \frac{\abs{\bx}^2}{2}.
\end{equation*}
The system's energy functional then is
\begin{equation}
\label{eqn:harmonic:vl-energy}
\HF(f) = \int_{\RZ} \frac{\abs{\bv}^2}{2}~ f(\bz)~ \intd\bz + \frac{1}{2} \int_{\RZ\times\RZ} f(\bz_1)~ \frac{\left(\bx_1-\bx_2\right)^2}{2}~ f(\bz_2)~ \intd(\bz_1,\bz_2),
\end{equation}
yielding the Vlasov equation
\begin{equation}
\tag{Vl} \label{eqn:harmonic:vl}
\partial_t f(t) = \symp{\frac{\abs{\bv}^2}{2} + \convol{\Gamma}{f(t)}(\bx)}{f(t)} = - \bv\cdot\naX f(t) + \convol{\nabla\Gamma}{f(t)}\cdot \naV f(t).
\end{equation}
While the condition of a fixed center of mass at $\bx=\bn$ automatically yields $\convol{\nabla\Gamma}{f(t)}(\bx)=\bx$, the existence of a large variety of periodic solutions is not at all surprising for this model. 

\begin{prop}[Solutions of Harmonic Vlasov]
\label{prop:harmonic:solutions}
Let $\mathring{f}: \RX\times\RV\to\RNp$ be a differentiable function, s.t.
\begin{equation*}
\int \mathring{f}(\bx,\bv)~ \intd(\bx,\bv) = 1 \quad \text{and} \quad \int\left(\abs{\bx}+\abs{\bv}\right) \mathring{f}(\bx,\bv)~ \intd(\bx,\bv) < \infty.
\end{equation*}
Then up to translation $\mathring{f}$ gives rise to a solution with period $2\pi$. This period is not necessarily minimal.
\end{prop}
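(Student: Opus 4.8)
The plan is to exploit the special structure of the harmonic potential to linearise the dynamics. The starting point is the remark already made in the text: since $\nabla\Gamma(\bx)=\bx$, for any $f$ of unit mass one has
\[
\convol{\nabla\Gamma}{f}(\bx) = \int_{\RZ}(\bx-\by)\,f(\by,\bw)\,\intd(\by,\bw) = \bx - \bar{\bx}, \qquad \bar{\bx} \equiv \int_{\RZ}\by\,f(\by,\bw)\,\intd(\by,\bw),
\]
so the force at $\bx$ is simply the displacement from the instantaneous centre of mass $\bar{\bx}$. The finiteness of the first moment of $\mathring f$ is exactly what makes this integral converge, and by translating the initial profile $\mathring f$ in phase space I may assume $\bar{\bx}=\bn$ and $\int_{\RZ}\bv\,\mathring f\,\intd\bz=\bn$ at $t=0$ (this is the ``up to translation'' in the statement). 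The whole argument rests on the fact that with a vanishing centre of mass the force collapses to $\convol{\nabla\Gamma}{f}(\bx)=\bx$, so that \eqref{eqn:harmonic:vl} degenerates into the \emph{linear} transport equation $\partial_t f = -\bv\cdot\naX f + \bx\cdot\naV f$.

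The first step I would carry out carefully is to show that this centring is preserved by the flow, so the linearisation is legitimate for all times and not merely at $t=0$. Testing \eqref{eqn:harmonic:vl} against $\bx$ and against $\bv$ and integrating by parts yields the \emph{closed} system $\partial_t\bar{\bx} = \bar{\bv}$ and $\partial_t\bar{\bv} = \bn$ for the first moments; the second equation decouples from the detailed shape of $f$ precisely because $\int_{\RZ}(\bx-\bar{\bx})\,f\,\intd\bz$ vanishes identically. With the chosen data $\bar{\bx}(0)=\bar{\bv}(0)=\bn$ it follows that $\bar{\bx}(t)\equiv\bn$, so $\convol{\nabla\Gamma}{f(t)}(\bx)=\bx$ holds throughout the evolution and the reduction to linear transport is self-consistent.

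It then remains to solve the linear transport equation by characteristics. The characteristic system $\dot{\bx}=\bv$, $\dot{\bv}=-\bx$ is a harmonic oscillator, whose flow $\Phi_t$ is the rotation through angle $t$ in each plane $(x_i,v_i)$; in particular $\Phi_{2\pi}=\mathrm{id}$. Consequently $f(t,\bz)=\mathring f(\Phi_{-t}\bz)$ is a classical solution with $f(t+2\pi)=f(t)$, and since $\Phi_t$ is linear, measure-preserving, and has bounded (trigonometric) entries, both differentiability and the first-moment bound propagate in time, so the force stays well defined and $f$ genuinely solves \eqref{eqn:harmonic:vl}. The assertion that $2\pi$ need not be minimal follows by exhibiting profiles with extra symmetry, for instance $\mathring f$ invariant under $\Phi_{2\pi/k}$, or depending only on $\abs{\bx}^2+\abs{\bv}^2$, in which case $f$ is stationary.

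I expect the only genuine subtlety to be the apparent circularity in asserting $\convol{\nabla\Gamma}{f}(\bx)=\bx$ for all times, since this identity is used to define the very dynamics whose moments I then control. This is exactly why the decoupled moment system of the second step is the crux: it pins down $\bar{\bx}(t)$ independently of the shape of $f$ and so breaks the circularity. Equivalently, and perhaps cleaner for a fully rigorous write-up, one can simply \emph{define} $f(t,\bz)=\mathring f(\Phi_{-t}\bz)$ from the outset, verify by the change of variables $\bz\mapsto\Phi_t\bz$ (Jacobian $1$) that its centre of mass stays at $\bn$, and then check directly that it satisfies \eqref{eqn:harmonic:vl}, thereby bypassing any existence or uniqueness theory.
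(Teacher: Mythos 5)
Your proposal is correct and takes essentially the same route as the paper: translate the initial datum so that both first moments vanish, note that the force then collapses to $\bx$, and write down the explicit rotation solution $f(t,\bz)=\mathring f(\Phi_{-t}\bz)$, which is precisely the paper's formula $\mathring f(\bx\cos t-\bv\sin t,\,\bx\sin t+\bv\cos t)$. Your moment-equation argument for the persistence of centering and your direct-verification alternative simply spell out what the paper compresses into ``one easily verifies,'' and your symmetry examples justify the non-minimality remark.
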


\begin{proof}
W.l.o.g. assume (by appropriate translation) that
\begin{equation*}
\int \twovec{\bx}{\bv} \mathring{f}(\bx,\bv)~ \intd(\bx,\bv) = \twovec{\bn}{\bn}.
\end{equation*}
One easily verifies that
\begin{equation*}
f(t,\bx,\bv) \equiv \mathring{f}\left(\bx \cos t - \bv \sin t, \bx \sin t + \bv \cos t\right)
\end{equation*}
is the corresponding solution of \eqref{eqn:harmonic:vl}. Adding the center of mass motion for general initial data completes the proof.
\end{proof}

\noindent The improvement achieved by the symmetry reduction method in this paper is to algebraically characterize the \textit{minimal} period and stationary solutions, respectively. Nevertheless, the focus really lies on the method itself and the helpful insights it gives in order to solve more complicated systems.

\subsection{Phase equivariant reduction and transformations}
The associated Vlasov Hamiltonian of the energy functional \eqref{eqn:harmonic:vl-energy} constructed from \eqref{eqn:harmonic:vlasov-hamiltonian} is
\begin{equation*}
\HVF(\alpha) = \frac{1}{2}\SP{\alpha}{\bv \cdot \frac{1}{\mi}~\naX \alpha} + \Im \SP{\naV \alpha}{\convol{\Gamma}{\abs{\alpha}^2}~ \naX\alpha}.
\end{equation*}
In this particular case proves reasonable to apply a partial Fourier transform in the velocity coordinates $\bv$, i.e.,
\begin{equation*}
\fourier: \Lp{2}{\bz} \to \Lp{2}{\VF{\bz}}, \quad \alpha \mapsto \VF{\alpha}(\VF{\bz}) \equiv \VF{\alpha}(\bx,\xi) \equiv \frac{1}{\left(2\pi\right)^{\frac{\ddim}{2}}} \int_{\RV} \alpha(\bx,\bv)~ e^{-\mi \bv\cdot\xi}~ \intd\bv.
\end{equation*}
This transformation, already fruitfully used in \cite{neisspickl}, is of course an isometry of $\Lp{2}{}$ and thereby also a symplectic diffeomorphism. The Hamiltonian formalism therefore is protected under this transformation. It yields the \textbf{Pseudo Hartree Hamiltonian}
\begin{align*}
\HHF(\VF{\alpha}) = \frac{1}{2} \SP{\VF{\alpha}}{\left(\naX\cdot\naXi + \frac{1}{2} \convol{\VF{V}}{\abs{\VF{\alpha}}^2}\right)\VF{\alpha}}, \quad \text{where} \quad \VF{V}(\bx,\xi) \equiv -\nabla\Gamma(\bx)\cdot\xi = -\bx\cdot\xi.
\end{align*}
In this particular case, a second transformation simplifies the structure significantly. In fact, the self-inverse (hence volume-preserving) phase space transformation
\begin{equation*}
\tau: \RZH \to \RW, \quad (\bx,\xi) \mapsto \bw \equiv \left(\bq,\bp\right) \equiv \left(\frac{1}{\sqrt{2}}(\bx+\xi), \frac{1}{\sqrt{2}} (\bx-\xi)\right), 
\end{equation*}
and the notation $\beta=\tau_*\VF{\alpha} \equiv \VF{\alpha}\circ\inv{\tau}$, $W(\bq,\bp)\equiv\VF{V}\circ\inv{\tau}(\bq,\bp) = -\frac{\abs{\bq}^2}{2} + \frac{\abs{\bp}^2}{2}$, result in the new Hamiltonian
\begin{align}
\label{eqn:harmonic:hamilton-functional}
\HHF(\beta) =&~ \frac{1}{2}\SP{\beta}{\left(\frac{1}{2} \LQ - \frac{1}{2} \LP + \frac{1}{2}\convol{W}{\abs{\beta}^2}\right)\beta} \\
\nonumber
=&~ \frac{1}{2} \SP{\beta} {\left(\frac{1}{2} \LQ - \frac{\abs{\bq}^2}{2} \Lpn{\beta}{2}{\bw}^2 - \frac{1}{2} \LP + \frac{\abs{\bp}^2}{2} \Lpn{\beta}{2}{\bw}^2\right)~ \beta} + \frac{1}{4} \left|\SP{\beta}{\bq~ \beta}\right|^2 - \frac{1}{4} \abs{\SP{\beta}{\bp~\beta}}^2.
\end{align}
Also $\tau_*$ is a symplectic diffeomorphism, because $\tau$ preserves volume. Ultimately, this yields the Hamiltonian equation of motion for $\beta$ with $\HamVec{\HHF}$ denoting the Hamiltonian vector field
\begin{align}
\nonumber
\HamVec{\HHF}\left(\beta\right) =&~ \frac{1}{\mi}\left(\frac{1}{2} \LQ - \frac{\abs{\bq}^2}{2} - \frac{1}{2} \LP + \frac{\abs{\bp}^2}{2} \right) \beta 
+\frac{1}{\mi}\left(\SP{\beta}{\bq~ \beta} \cdot \bq - \SP{\beta}{\bp~ \beta} \cdot \bp\right) \beta \\
\label{eqn:harmonic:unprojected-vf}
&+\frac{1}{\mi}\left(-\SP{\beta}{\frac{\abs{\bq}^2}{2}~ \beta} + \SP{\beta}{\frac{\abs{\bp}^2}{2}~ \beta}\right) \beta
+\frac{1}{\mi}\left(\Lpn{\beta}{2}{\bw}^2-1\right)\frac{-\abs{\bq}^2+\abs{\bp}^2} {2}~ \beta.
\end{align}
From this representation one sees that the restriction to $\Sph{\Lp{2}{\bw}}$ and projection $\Pi$ from Proposition \ref{prop:harmonic:pe-symplecticity} greatly simplify the Hamiltonian vector field, since the restriction cancels the last summand and all terms in $\mi\RN\beta$ lie in the kernel of the projection. The \textbf{Harmonic Hartree equation} is
\begin{equation}
\tag{HHt} \label{eqn:harmonic:harmonic-hartree}
\partial_t\beta = \HamVec{\HHF}(\beta), \quad \beta: \RN \to \Lp{2}{\bw}.
\end{equation} \\

\noindent It turns out extremely useful to exploit the structural equivalence to the quantum mechanical harmonic oscillator and rewrite the equation with the algebraic ladder operators, a well-known formalism from textbook quantum mechanics \cite[Ch.3.1]{schwabl}, i.e.,
\begin{equation*}
\qma_i = \frac{1}{\sqrt{2}}\left(q_i+\partial_{q_i}\right), \quad \qma_i^* = \frac{1}{\sqrt{2}}\left(q_i-\partial_{q_i}\right), \quad \qmb_i = \frac{1}{\sqrt{2}} \left(p_i + \partial_{p_i}\right), \quad \qmb_i^* = \frac{1}{\sqrt{2}}\left(p_i-\partial_{p_i}\right),
\end{equation*}
along with the complete basis of eigenfunctions $\left\{\ket{\ba,\bb} : \ba,\bb\in\NN_0^\ddim\right\}$, that simultaneously diagonalize the commuting counting operators $\qma_i^*\qma_i$ and $\qmb_i^*\qmb_i$. We recall that this implies
\begin{equation*}
\qma_i~\ket{\ba,\bb} = \sqrt{a_i}~ \ket{\ba-\be_i,\bb}, \quad
\qma_i^*~\ket{\ba,\bb} = \sqrt{a_i+1}~ \ket{\ba+\be_i,\bb}, \quad
\qma_i^*\qma_i~ \ket{\ba,\bb} = a_i~ \ket{\ba,\bb},
\end{equation*}
and for $\qmb$ likewise. Due to its multiple occurence, it is also useful to define the \textit{linear excitation operator}
\begin{equation*}
\qmN \equiv \sum_{i=1}^{\ddim} \left(\qmb_i^*\qmb_i - \qma_i^*\qma_i\right).
\end{equation*}
For further convenience we define the simultaneous eigenspaces of $\sum_i\qma_i^*\qma_i$ and $\sum_i\qmb_i^*\qmb_i$:
\begin{equation*}
\fock{A,B} \equiv \vspanC{\ket{\ba,\bb}}{\sum_{i=1}^{\ddim} a_i = A, \sum_{i=1}^{\ddim} b_i=B}, \quad \fock{N} \equiv \bigoplus_{B-A=N} \fock{A,B}, \quad \Lp{2}{\bw} = \bigoplus_{N=0}^\infty \fock{N}.
\end{equation*} 
Especially, $\qmN$ is diagonalizable with
\begin{equation*}
\forall N\in\IN: \ker\left(\qmN-N\right) = \fock{N} \quad \text{and} \quad \Spec{\qmN} = \IN.
\end{equation*}
\\

\noindent In this algebraic notation, the Hamiltonian vector field can be represented upon replacing $q_i=\frac{1}{\sqrt{2}} (\qma_i+\qma_i^*)$, $p_i=\frac{1}{\sqrt{2}} (\qmb_i+\qmb_i^*)$ in \eqref{eqn:harmonic:unprojected-vf}
\begin{align}
\nonumber
\HamVec{\HHF}(\beta)
=&~ \frac{1}{\mi} \left(\qmN + \frac{1}{2} \SP{\beta}{\qmN\beta}\right)\beta 
+ \frac{1}{2\mi}\Re\SP{\beta}{\sum_{i=1}^{\ddim} \left(\qmb_i^*\qmb_i^* - \qma_i\qma_i\right) \beta} \beta \\
\nonumber
&- \frac{1}{\mi}\sum_{i=1}^{\ddim} \left(\Re \SP{\beta}{\qmb_i \beta} \left(\qmb_i+\qmb_i^*\right) - \Re \SP{\beta}{\qma_i \beta} \left(\qma_i+\qma_i^*\right)\right) \beta \\
\label{eqn:harmonic:algebraic-vf}
&+ \frac{1}{4\mi}\left(\Lpn{\beta}{2}{}^2 - 1\right) \left(2\qmN + \sum_{i=1}^{\ddim} \left(\qmb_i\qmb_i+\qmb_i^*\qmb_i^* - \qma_i\qma_i - \qma_i^*\qma_i^*\right)\right) \beta.
\end{align}
This is a well-defined $\Lp{2}{\bw}$ valued vector-field on the dense domain
\begin{equation}
\label{eqn:harmonic:vf-domain}
\domN \equiv \left\{\beta=\sum_{A,B=0}^{\infty} \beta_{A,B} \in\Lp{2}{\bw}: \beta_{A,B}\in\fock{A,B}, \sum_{A,B=0}^{\infty} (A^2+B^2) \Lpn{\beta_{A,B}}{2}{\bw}^2 < \infty\right\} \subset \Lp{2}{\bw}.
\end{equation} \\

\noindent With these bosonic creation and annihilation operators on $\Lp{2}{\bw}$, application of Proposition \ref{prop:harmonic:pe-symplecticity} provides at $\beta\in\Sph{\Lp{2}{\bw}}\cap\domN$ the Hamiltonian $\PHHF$ and the Hamiltonian vector field $\HamVec{\PHHF}$ 
\begin{align}
\label{eqn:harmonic:pe-harmonic-hamiltonian}
\PHHF(\Pi\beta) =&~ \HHF(\beta) \stackrel{\Lpn{\beta}{2}{\bw}=1}{=} \frac{1}{2} \SP{\beta} {\qmN \beta} + \frac{1}{2} \sum_{i=1}^{\ddim} \left|\Re\SP{\beta}{\qma_i \beta}\right|^2 - \frac{1}{2} \sum_{i=1}^{\ddim} \abs{\Re\SP{\beta}{\qmb_i~\beta}}^2, \\
\nonumber
\HamVec{\PHHF}(\Pi\beta) =&~ 
\intd\Pi_\beta\left(\HamVec{\HHF}(\beta)\right) \\
\nonumber
=&~ \Sph{1}\left(\beta,\frac{1}{\mi}\left(\qmN-\SP{\beta}{\qmN\beta}\right)\beta 
- \frac{1}{\mi}\sum_{i=1}^{\ddim} \left(\Re\SP{\beta}{\qmb_i \beta} \left(\qmb_i^* + \qmb_i\right) - 2\abs{\Re\SP{\beta}{\qmb_i\beta}}^2\right) \beta\right. \\
\label{eqn:harmonic:pe-harmonic-hamvec}
&\quad\quad \left.+ \frac{1}{\mi}\sum_{i=1}^{\ddim} \left(\Re\SP{\beta}{\qma_i \beta} \left(\qma_i^*+\qma_i\right) - 2\abs{\Re\SP{\beta}{\qma_i \beta}}^2\right) \beta\right).
\end{align}
The phase equivariant Hamiltonian Hartree equation is denoted by
\begin{equation}
\tag{HHt\textsuperscript{red}} \label{eqn:harmonic:pe-hamilton-hartree}
\partial_t \Pi\beta(t) = \HamVec{\PHHF}(\Pi\beta(t)), \quad \Pi\beta: \RN \to \Sph{\Lp{2}{\bw}}/\Sph{1}.
\end{equation}

\subsection{Stationary points and spectral classification}

\label{sec:harmonic:spectrum}

The easiest way to find periodic solutions is by bifurcation around stationary points of the Hamiltonian dynamics. Many classical results exist using this principle. We want to compute relative equilibria $\Pi\beta\in\Sph{\Lp{2}{\bw}}/\Sph{1}$ and identify possible periods of bifurcating families from the spectrum of 
\begin{equation*}
\Del{}\HamVec{\PHHF}(\Pi\beta): T_{\Pi\beta}\left(\Sph{\Lp{2}{\bw}}/\Sph{1}\right) \to T_{\Pi\beta}\left(\Sph{\Lp{2}{\bw}}/\Sph{1}\right).
\end{equation*}
Indeed, in this highly symmetric model, every eigenvector of the differential at a stationary point can be linked to a bifurcating periodic family or to a continuous symmetry, as will be outlined at the end of this subsection.

\begin{lem}[Relative equilibria]
\label{lem:harmonic:stationary-points}
The phase equivariant projection $\Pi\beta$ of any unit eigenvector $\beta\in\domN$ from \eqref{eqn:harmonic:vf-domain} of $\qmN$ is a stationary point of \eqref{eqn:harmonic:pe-hamilton-hartree}.
\end{lem}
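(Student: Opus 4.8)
The plan is to verify the statement by a direct computation: I will show that the tangent vector $\HamVec{\PHHF}(\Pi\beta)$ vanishes, which by the definition of the dynamics \eqref{eqn:harmonic:pe-hamilton-hartree} is precisely the assertion that $\Pi\beta$ is stationary. Rather than recompute anything, I would read $\HamVec{\PHHF}(\Pi\beta)$ off its explicit representative in \eqref{eqn:harmonic:pe-harmonic-hamvec} and argue that every summand appearing there vanishes once $\beta$ is a pure $\qmN$-eigenvector.

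First I would fix the setup. By hypothesis $\beta\in\domN$ is a unit eigenvector of $\qmN$, so $\beta\in\fock{N}=\ker(\qmN-N)$ for some $N\in\IN$ with $\Lpn{\beta}{2}{\bw}=1$. The only structural input required is the grading of the ladder operators with respect to $\qmN$. From $[\qmN,\qma_i]=\qma_i$ and $[\qmN,\qmb_i]=-\qmb_i$ (equivalently, the bookkeeping $\qma_i:\fock{N}\to\fock{N+1}$ and $\qmb_i:\fock{N}\to\fock{N-1}$ built into the decomposition $\Lp{2}{\bw}=\bigoplus_N\fock{N}$), the vectors $\qma_i\beta$ and $\qmb_i\beta$ lie in the $\qmN$-eigenspaces $\fock{N+1}$ and $\fock{N-1}$, hence are orthogonal to $\beta\in\fock{N}$. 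This gives $\SP{\beta}{\qma_i\beta}=\SP{\beta}{\qmb_i\beta}=0$ for all $1\le i\le\ddim$; in particular their real parts vanish. Substituting into \eqref{eqn:harmonic:pe-harmonic-hamvec} then collapses the representative: the leading term is $(\qmN-\SP{\beta}{\qmN\beta})\beta=(\qmN-N)\beta=0$ because $N=\SP{\beta}{\qmN\beta}$, while every term in the two sums carries a factor $\Re\SP{\beta}{\qma_i\beta}$ or $\Re\SP{\beta}{\qmb_i\beta}$ and therefore drops out. Thus the representative is identically $0$, and since the zero tangent vector at $\Pi\beta$ is represented by $\Sph{1}(\beta,0)$, I conclude $\HamVec{\PHHF}(\Pi\beta)=0$.

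As a cross-check and to expose the conceptual content, I would also evaluate the unreduced field \eqref{eqn:harmonic:algebraic-vf} (with $\Lpn{\beta}{2}{}=1$, so its last line vanishes) at $\beta\in\fock{N}$. The same selection rules force $\SP{\beta}{(\qmb_i^*\qmb_i^*-\qma_i\qma_i)\beta}=0$, since $\qmb_i^*\qmb_i^*\beta,\qma_i\qma_i\beta\in\fock{N+2}$, leaving $\HamVec{\HHF}(\beta)=\tfrac{3N}{2\mi}\beta\in\mi\RN\beta$. This displays $\beta$ as a relative equilibrium whose unreduced motion is pure global phase rotation, i.e. exactly the direction annihilated by $\intd\Pi_\beta$, so the projection promotes it to a genuine equilibrium on the quotient. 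I do not expect a serious obstacle: the argument is a direct substitution. The only points requiring care are the correct identification of the $\qmN$-grading of the ladder operators (so that the off-diagonal matrix elements are seen to vanish) and the elementary observation that $\Sph{1}(\beta,V)$ is the zero tangent vector if and only if $V=0$.
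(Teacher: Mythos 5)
Your proof is correct and follows essentially the same route as the paper: you use the grading $\qma_i(\fock{N})\subseteq\fock{N+1}$, $\qmb_i(\fock{N})\subseteq\fock{N-1}$ to conclude $\Re\SP{\beta}{\qma_i\beta}=\Re\SP{\beta}{\qmb_i\beta}=0$, which together with $(\qmN-\SP{\beta}{\qmN\beta})\beta=(\qmN-N)\beta=0$ annihilates every term of the representative in \eqref{eqn:harmonic:pe-harmonic-hamvec}. Your additional cross-check on the unreduced field \eqref{eqn:harmonic:algebraic-vf}, showing $\HamVec{\HHF}(\beta)\in\mi\RN\beta$ so that the unreduced motion is pure phase rotation killed by $\intd\Pi_\beta$, is a nice clarification of why the projection is what turns $\beta$ into a genuine equilibrium, but it is not needed for the statement itself.
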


\begin{proof}
This is immediate from the representation \eqref{eqn:harmonic:pe-harmonic-hamvec}, because $\Re\SP{\beta}{\qma_i\beta} = \Re\SP{\beta}{\qmb_i\beta} = 0$, as the operators $\qma_i$ ($\qmb_i$) will map $\beta$ to the eigenspace of the next higher (lower) eigenvalue of $\qmN$, as seen in the stated properties of the previous section, i.e.,
\begin{equation*}
\forall N\in\IN: \quad \qma_i(\fock{N}) \subseteq \fock{N+1}, \quad \qmb_i(\fock{N}) \subseteq \fock{N-1},
\end{equation*}
and those eigenspaces are orthogonal to one another.
\end{proof}

\begin{prop}[Spectral properties]
\label{prop:harmonic:spectral-properties}
For $N\in\IN$ at any unit eigenvector $\mathring{\beta}\in\ker\left(\qmN-N\right)\cap\Sph{\Lp{2}{\bw}}$ of $\qmN$, on the subspace
\begin{align*}
\perB{\Pi\mathring{\beta}} \equiv&~ \Sph{1}\left\{\begin{array}{c}
(\mathring{\beta},\delta\beta): \delta\beta\in\domN, \forall 1\leq i\leq\ddim: \\ \Re\SP{\delta\beta}{\qma_i\mathring{\beta}} = \Re\SP{\delta\beta}{\qma_i^*\mathring{\beta}} = 0, \\
\Re\SP{\delta\beta}{\qmb_i\mathring{\beta}} = \Re\SP{\delta\beta}{\qmb_i^*\mathring{\beta}} = 0
\end{array}\right\} 
\subseteq \tang{\Pi\mathring{\beta}}{\left(\Sph{\Lp{2}{\bw}}/\Sph{1}\right)}
\end{align*}
which has real codimension $4\ddim$ in  $\tang{\Pi\beta}{\left(\Sph{\Lp{2}{\bw}}/\Sph{1}\right)}$ one finds \begin{equation*}
\left.\Del{}\HamVec{\PHHF}(\Pi\mathring{\beta})\right|_{\perB{\Pi\mathring{\beta}}} \left(\Sph{1}(\mathring{\beta},\delta\beta)\right) = \Sph{1}\left(\mathring{\beta},\frac{1}{\mi} (\qmN-N)\delta\beta\right) \quad \text{and} \quad \mi\IN \subseteq \Spec{\Del{}\HamVec{\PHHF}(\Pi\mathring{\beta})}.
\end{equation*}
\end{prop}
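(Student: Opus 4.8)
The plan is to work entirely in the chart of Lemma~\ref{lem:harmonic:coordinate-representations}, in which a tangent vector at $\Pi\mathring{\beta}$ is represented by $\Sph{1}(\mathring{\beta},\delta\beta)$ with $\SP{\mathring{\beta}}{\delta\beta}=0$ and $\delta\beta\in\domN$, and to read the linearization off the horizontal representative $W(\beta)$ of the vector field written in \eqref{eqn:harmonic:pe-harmonic-hamvec}. Since $\Pi\mathring{\beta}$ is a stationary point by Lemma~\ref{lem:harmonic:stationary-points}, the field vanishes there, so its linearization is a genuine connection-independent endomorphism of $\tang{\Pi\mathring{\beta}}{\left(\Sph{\Lp{2}{\bw}}/\Sph{1}\right)}$, computed by differentiating $W$ along a curve through $\mathring{\beta}$ with velocity $\delta\beta$ and then taking the horizontal part as prescribed by Lemma~\ref{lem:harmonic:coordinate-representations}(iii). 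Throughout I would use the stationarity identities $\qmN\mathring{\beta}=N\mathring{\beta}$, $\SP{\mathring{\beta}}{\qmN\mathring{\beta}}=N$, and $\Re\SP{\mathring{\beta}}{\qma_i\mathring{\beta}}=\Re\SP{\mathring{\beta}}{\qmb_i\mathring{\beta}}=0$ from the proof of Lemma~\ref{lem:harmonic:stationary-points}, together with self-adjointness of $\qmN$.

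Next I would differentiate \eqref{eqn:harmonic:pe-harmonic-hamvec} term by term. The leading term $\frac{1}{\mi}(\qmN-\SP{\beta}{\qmN\beta})\beta$ contributes $\frac{1}{\mi}(\qmN-N)\delta\beta$ plus two corrections proportional to $\mathring{\beta}$ with coefficients $\SP{\delta\beta}{\qmN\mathring{\beta}}$ and $\SP{\mathring{\beta}}{\qmN\delta\beta}$; both vanish because $\qmN\mathring{\beta}=N\mathring{\beta}$, $\SP{\mathring{\beta}}{\delta\beta}=0$, and $\qmN$ is self-adjoint. In each $\qmb_i$- and $\qma_i$-sum the scalar prefactor $\Re\SP{\beta}{\qmb_i\beta}$ (resp. $\Re\SP{\beta}{\qma_i\beta}$) vanishes at $\mathring{\beta}$, so every surviving piece either carries that vanishing factor or is the derivative of that prefactor, namely $\Re\SP{\delta\beta}{\qmb_i\mathring{\beta}}+\Re\SP{\delta\beta}{\qmb_i^*\mathring{\beta}}$ (using $\Re\SP{\mathring{\beta}}{\qmb_i\delta\beta}=\Re\SP{\delta\beta}{\qmb_i^*\mathring{\beta}}$ via the adjoint), and analogously for $\qma_i$. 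These are exactly the four real functionals set to zero in the definition of $\perB{\Pi\mathring{\beta}}$, so on that subspace the whole $\qma_i$- and $\qmb_i$-content drops and the linearization reduces to $\Sph{1}\left(\mathring{\beta},\frac{1}{\mi}(\qmN-N)\delta\beta\right)$, as claimed.

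Two points then need checking. First, the chart formula would a priori require the horizontal projection of $\frac{1}{\mi}(\qmN-N)\delta\beta$, but this is automatic since $\SP{\frac{1}{\mi}(\qmN-N)\delta\beta}{\mathring{\beta}}=\frac{1}{\mi}\SP{\delta\beta}{(\qmN-N)\mathring{\beta}}=0$, so no projection correction appears. Second, for the codimension I would note that $\perB{\Pi\mathring{\beta}}$ is cut out inside the tangent space by the $4\ddim$ real-linear functionals $\delta\beta\mapsto\Re\SP{\delta\beta}{w}$ with $w$ ranging over $\qma_i\mathring{\beta},\qma_i^*\mathring{\beta},\qmb_i\mathring{\beta},\qmb_i^*\mathring{\beta}$. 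Because $\qma_i\mathring{\beta},\qmb_i^*\mathring{\beta}\in\fock{N+1}$ and $\qma_i^*\mathring{\beta},\qmb_i\mathring{\beta}\in\fock{N-1}$ are orthogonal to $\fock{N}\ni\mathring{\beta}$, these functionals are unaffected by the tangency constraint $\SP{\mathring{\beta}}{\delta\beta}=0$; their $\RN$-linear independence (equivalently, that of the $4\ddim$ vectors, which split orthogonally between $\fock{N\pm1}$) yields codimension exactly $4\ddim$.

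Finally, for the spectrum I would use that on $\fock{M}$ the operator $\frac{1}{\mi}(\qmN-N)$ acts as the scalar $\mi(N-M)$. Given $k\in\IN$, set $M=N-k$; since each $\fock{M}$ is infinite-dimensional while $\perB{\Pi\mathring{\beta}}$ imposes only finitely many conditions (and none once $M\notin\{N-1,N,N+1\}$), I can choose a nonzero $\delta\beta\in\fock{M}\cap\domN$ lying in $\perB{\Pi\mathring{\beta}}$, so that $\Sph{1}(\mathring{\beta},\delta\beta)$ is an eigenvector of $\Del{}\HamVec{\PHHF}(\Pi\mathring{\beta})$ with eigenvalue $\mi k$; hence $\mi\IN\subseteq\Spec{\Del{}\HamVec{\PHHF}(\Pi\mathring{\beta})}$. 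I expect the main obstacle to be bookkeeping rather than conceptual: carefully confirming that the linearization on the quotient reduces to differentiating the ambient representative followed by a (here vanishing) horizontal projection, and securing the $\RN$-linear independence for the exact codimension count, where degenerate eigenvectors annihilated by some $\qma_i$ or $\qmb_i$ would have to be handled separately.
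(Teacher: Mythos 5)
Your proposal is correct and follows essentially the same route as the paper's own proof: both work in the chart of Lemma \ref{lem:harmonic:coordinate-representations}, linearize the chart representative of $\HamVec{\PHHF}$ at the zero $\mathring{\beta}$, use $\qmN\mathring{\beta}=N\mathring{\beta}$ and $\Re\SP{\mathring{\beta}}{\qma_i\mathring{\beta}}=\Re\SP{\mathring{\beta}}{\qmb_i\mathring{\beta}}=0$ to reduce the derivative to $\frac{1}{\mi}(\qmN-N)$ plus a finite-rank perturbation spanned by $(\qma_i+\qma_i^*)\mathring{\beta}$ and $(\qmb_i+\qmb_i^*)\mathring{\beta}$, which annihilates $\perB{\Pi\mathring{\beta}}$, and then read off $\mi\IN$ from the eigenspaces $\fock{M}$. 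Your two additional checks --- that no horizontal projection correction arises, and the explicit codimension count including the flag that eigenvectors annihilated by some $\qma_i$ or $\qmb_i$ (e.g.\ the ground state) make the codimension drop below $4\ddim$ --- are points the paper's proof silently skips, and the latter is a genuine caveat to the statement as written.
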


\begin{proof}
Carrying out a spectral analysis on the quotient manifold $\Sph{\Lp{2}{\bw}}/\Sph{1}$ is possible in the chart of Lemma \ref{lem:harmonic:coordinate-representations}-(ii). In this chart $\HamVec{\PHHF}$ is represented by the vector field on $\beta\in\domN\cap\Sph{\Lp{2}{\bw}}$
\begin{align*}
\XV(\beta) =&~ \frac{1}{\mi}\left(\qmN-\SP{\beta}{\qmN\beta}\right)\beta 
- \frac{1}{\mi}\sum_{i=1}^{\ddim} \left(\Re\SP{\beta}{\qmb_i \beta} \left(\qmb_i^* + \qmb_i\right) - 2\abs{\Re\SP{\beta}{\qmb_i\beta}}^2\right) \beta \\
&~ + \frac{1}{\mi}\sum_{i=1}^{\ddim} \left(\Re\SP{\beta}{\qma_i \beta} \left(\qma_i^*+\qma_i\right) - 2\abs{\Re\SP{\beta}{\qma_i \beta}}^2\right) \beta,
\end{align*}
s.t. $\SP{\beta}{\XV(\beta)}=0$. It now suffices to compute the spectrum of $\Del{}\XV(\mathring{\beta}): \{\mathring{\beta}\}^\perp \to \{\mathring{\beta}\}^\perp$, because its spectrum equals the one of $\Del{}\HamVec{\PHHF}$ and their eigenspaces map to one another.

For the linearization of $\XV$ at its zero $\mathring{\beta}$ with $\qmN\mathring{\beta}=N\mathring{\beta}$, $\SP{\mathring{\beta}}{\delta\beta}=0$, one computes
\begin{align*}
\Del{}\XV(\mathring{\beta})(\delta\beta) =&~ 
\frac{1}{\mi}\left(\qmN-N\right)\delta\beta
- \frac{1}{\mi} \sum_{i=1}^{\ddim} \Re\SP{\delta\beta}{\left(\qmb_i^*+\qmb_i\right) \mathring{\beta}} \left(\qmb_i^* + \qmb_i\right) \mathring{\beta} \\
&~ + \frac{1}{\mi} \sum_{i=1}^{\ddim} \Re\SP{\delta\beta}{\left(\qma_i^* + \qma_i\right) \mathring{\beta}} \left(\qma_i^*+\qma_i\right) \mathring{\beta}.
\end{align*}
The two sums combined are a nuclear perturbation of the operator $\frac{1}{\mi}(\qmN-N)$ with finite-dimensional range. Indeed,
\begin{equation*}
\perB{\mathring{\beta}} \equiv \left\{\delta\beta\in\domN: \SP{\delta\beta}{\mathring{\beta}} = 0, \forall 1\leq i\leq\ddim: 
\begin{array}{c}
\Re\SP{\delta\beta}{\qma_i\mathring{\beta}} = \Re\SP{\delta\beta}{\qma_i^*\mathring{\beta}} = 0, \\
\Re\SP{\delta\beta}{\qmb_i\mathring{\beta}} = \Re\SP{\delta\beta}{\qmb_i^*\mathring{\beta}} = 0
\end{array}\right\}
\end{equation*}
lies in the kernel of the perturbation, proving $\left.\Del{}\XV(\mathring{\beta})\right|_{\perB{\mathring{\beta}}} = \frac{1}{\mi}(\qmN-N)$. Hence, every imaginary integer is an infinitely degenerate eigenvalue of $\Del{}\XV(\mathring{\beta})$. As this is a computation in a chart, this result transfers to $\left.\Del{}\HamVec{\PHHF}(\Pi\mathring{\beta})\right|_{\perB{\Pi\mathring{\beta}}}$, given $\perB{\Pi\mathring{\beta}}=\Pi(\mathring{\beta},\perB{\mathring{\beta}})$ respectively.
\end{proof}

\noindent The spectral decomposition hints at candidates for periodic solutions around the stationary point $\Pi\mathring{\beta}$. Nevertheless, in this highly degenerate problem, periodic families also bifurcate in directions outside eigenspaces\footnote{Which corresponds to oscillation index $>2$, see Section \ref{sec:harmonic:algebraic}}, as can be seen in Theorem \ref{thm:harmonic:topological-version}.

However, one derives a satisfying spectral characterization of periodic families as the $4\ddim$-dimensional complement of $\perB{\Pi\mathring{\beta}}$ in Theorem \ref{thm:harmonic:spectral-parameterization} reflects the kernel generated by the unreduced symmetry group $\RX\times\RXi$ from Proposition \ref{prop:harmonic:noether}, as for example,
\begin{equation*}
0 = \frac{1}{\sqrt{2}} \Re\SP{\delta\beta}{\left(\qma_i - \qma_i^*\right)\mathring{\beta}} = \Re\SP{\delta\beta}{\partial_{q_i}\mathring{\beta}},
\end{equation*}
showing orthogonality to the local generator of translation in $\bq$. \\

\noindent The following result proves that all spectrally admitted bifurcating periodic solutions around the identified relative equilibria actually exist. Although this problem's highly degenerated spectrum inhibits to obtain these solutions implicitly from bifurcation theory, we at least obtain a full spectral mapping, justified from explicit computations in the next section.

\begin{thm}[Spectral parameterization of periodic families]
\label{thm:harmonic:spectral-parameterization}
Let $\mathring{\beta}\in\domN\cap\Sph{\Lp{2}{\bz}}$ be a normed eigenvector of $\qmN$ and choose the subspace $\perB{\Pi\mathring{\beta}}\subseteq T_{\Pi\mathring{\beta}}\left(\Sph{\Lp{2}{\bz}}/\Sph{1}\right)$ as in Proposition \ref{prop:harmonic:spectral-properties}. Then there is a canonical injection
\begin{equation*}
\begin{array}{ccc}
\left\{\begin{array}{c}
(\Sph{1}\tilde{\beta},L): \tilde{\beta} \in \perB{\Pi\mathring{\beta}}, \\
\Lpn{\tilde{\beta}}{2}{\bw}=1, L \in \CN, \\
\tilde{\beta} \in \ker \left(\Del{}\HamVec{\PHHF}(\Pi\mathring{\beta}) - \mi L\right)
\end{array}\right\} & \to & \left\{\begin{array}{c}
\text{Continuous families of periodic solutions} \\
\text{and their periods bifurcating from $\Pi\mathring{\beta}$}
\end{array}\right\} \\
\left(\Sph{1}\tilde{\beta}, L\right) & \mapsto & \left(\begin{array}{ccc}
[0,\pi] & \to & \left\{\text{Closed curves in $\Sph{\Lp{2}{}}/\Sph{1}$}\right\} \times \RNp \\
\gamma & \mapsto & \left(\Sph{1}\left(\cos\frac{\gamma}{2}~ \mathring{\beta} + \sin\frac{\gamma}{2}~ \Sph{1}\tilde{\beta}\right), \frac{2\pi}{L}\right)
\end{array}\right).
\end{array}
\end{equation*}
\end{thm}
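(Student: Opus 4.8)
The plan is to collapse the infinite-dimensional flow to an explicitly solvable two-dimensional one. First I would pin down $\tilde{\beta}$ using Proposition \ref{prop:harmonic:spectral-properties}: since $\tilde{\beta}\in\perB{\Pi\mathring{\beta}}$ lies in $\ker\left(\Del{}\HamVec{\PHHF}(\Pi\mathring{\beta})-\mi L\right)$ and the differential restricted to $\perB{\Pi\mathring{\beta}}$ equals $\frac{1}{\mi}(\qmN-N)$, the relation $\frac{1}{\mi}(\qmN-N)\tilde{\beta}=\mi L\tilde{\beta}$ forces $\qmN\tilde{\beta}=(N-L)\tilde{\beta}$, so $L\in\IN$ and $\tilde{\beta}\in\fock{N-L}$ is orthogonal to $\mathring{\beta}\in\fock{N}$. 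I would then claim that for each $\gamma$ the datum $\beta_\gamma=\cos\frac{\gamma}{2}\mathring{\beta}+\sin\frac{\gamma}{2}\tilde{\beta}$ generates a solution of \eqref{eqn:harmonic:harmonic-hartree} staying in $V\equiv\mathrm{span}_\CN\{\mathring{\beta},\tilde{\beta}\}$, so that the dynamics reduces to a planar ODE for the coefficients $c_1(t),c_2(t)$, whose projection solves \eqref{eqn:harmonic:pe-hamilton-hartree}.

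The heart of the matter is invariance of $V$ under $\HamVec{\HHF}$ on the sphere. In the representation \eqref{eqn:harmonic:algebraic-vf}, the term $\frac{1}{\mi}\qmN\beta$ preserves $V$ because $\mathring{\beta},\tilde{\beta}$ are eigenvectors of $\qmN$, and the two terms proportional to $\beta$ (carrying $\SP{\beta}{\qmN\beta}$ and $\Re\SP{\beta}{\sum_i(\qmb_i^*\qmb_i^*-\qma_i\qma_i)\beta}$) preserve $V$ trivially; being imaginary multiples of $\beta$, they moreover lie in the kernel of $\intd\Pi_\beta$ and only rotate the global phase. The only terms that can leave $V$ are those carrying $\Re\SP{\beta}{\qma_i\beta}$ and $\Re\SP{\beta}{\qmb_i\beta}$. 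Using the gradings $\qma_i(\fock{M})\subseteq\fock{M+1}$ and $\qmb_i(\fock{M})\subseteq\fock{M-1}$ one checks that these coefficients vanish identically in $c_1,c_2$ unless $L=\pm1$; hence for $|L|\geq2$ the subspace $V$ is invariant outright.

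For the resonant case $L=\pm1$ I would identify $\Re\SP{\beta}{\qma_i\beta}$ and $\Re\SP{\beta}{\qmb_i\beta}$ with the collective coordinates $\SP{\beta}{q_i\beta}$ and $\SP{\beta}{p_i\beta}$ (up to the factor $\frac{1}{\sqrt{2}}$). The constraints defining $\perB{\Pi\mathring{\beta}}$ make these vanish at every $\beta_\gamma$, and the harmonic structure decouples the collective coordinates from the internal motion, evolving them by a closed linear homogeneous system — the center-of-mass separation already exploited in Proposition \ref{prop:harmonic:solutions}. Thus they stay zero for all time, the obstructing terms remain zero, and $V$ is invariant in this case as well. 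On $V$ the ODE then reads $\dot c_1=\frac{1}{\mi}(N+\frac{1}{2}\mu)c_1$ and $\dot c_2=\frac{1}{\mi}(N-L+\frac{1}{2}\mu)c_2$ for a common real factor $\mu$ collecting all scalar multiples of $\beta$; consequently $|c_1|,|c_2|$ are constant while the relative phase $\arg(c_2/c_1)$ advances at the constant rate $L$. After dividing out the global phase, the quotient trajectory is a closed curve of period $\frac{2\pi}{L}$, which is exactly the periodic solution bifurcating from $\Pi\mathring{\beta}$ (recovered at $\gamma=0$).

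It remains to see that the assignment is a well-defined continuous family and is injective. Continuity in $\gamma$ is immediate from the closed-form datum, with the endpoints $\gamma\in\{0,\pi\}$ degenerating to the relative equilibria $\Pi\mathring{\beta}$ and $\Pi\tilde{\beta}$. For injectivity, the period recovers $L$ and $\left.\frac{\intd}{\intd\gamma}\right|_{\gamma=0}\beta_\gamma=\frac{1}{2}\tilde{\beta}$ recovers the bifurcation direction $\Sph{1}\tilde{\beta}$. I expect the one genuine obstacle to be the resonant case $L=\pm1$: proving rigorously that the collective coordinates stay zero along the \emph{nonlinear} flow — equivalently, that the reduction survives the coupling of $V$ to the translation and boost directions spanned by $\qma_i\mathring{\beta},\qma_i^*\mathring{\beta},\qmb_i\mathring{\beta},\qmb_i^*\mathring{\beta}$ — is where the argument is not purely formal and relies on the center-of-mass decoupling special to the harmonic potential.
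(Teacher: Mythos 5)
Your treatment of the non‑resonant case $\abs{L}\geq 2$ is correct and is, in substance, the paper's own argument: for such $L$ the plane $V=\CN\mathring{\beta}\oplus\CN\tilde{\beta}$ is a centered subspace in the sense of Definition \ref{defn:harmonic:centered-subspaces} (cf.\ Example \ref{ex:harmonic:centered-subspaces}-(i)), Lemma \ref{lem:harmonic:vector-field} makes it invariant under the flow, and Theorem \ref{thm:harmonic:periodic-families} together with Corollary \ref{cor:harmonic:interpolating-families} --- which is all the paper's proof invokes --- yields the closed quotient curves of period $2\pi/\abs{L}$. Your planar ODE for $(c_1,c_2)$ is exactly this machinery written out by hand.

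The genuine gap is the resonant case $L=\pm1$, and the mechanism you propose there is false. The constraints in $\perB{\Pi\mathring{\beta}}$ kill only the \emph{real} parts $\Re\SP{\tilde{\beta}}{\qma_i\mathring{\beta}}$, $\Re\SP{\tilde{\beta}}{\qma_i^*\mathring{\beta}}$, $\Re\SP{\tilde{\beta}}{\qmb_i\mathring{\beta}}$, $\Re\SP{\tilde{\beta}}{\qmb_i^*\mathring{\beta}}$, i.e.\ the position‑type collective coordinates at $\beta_\gamma$; the conjugate momentum‑type coordinates are the \emph{imaginary} parts of the same inner products, and these are left completely free. Moreover, they do not feed an oscillation but a secular drift: on $\Sph{\Lp{2}{\bw}}$ equation \eqref{eqn:harmonic:harmonic-hartree} reads, by \eqref{eqn:harmonic:algebraic-vf}, $\mi\,\partial_t\beta=K(\beta)\beta$ with $K(\beta)=\qmN+\sum_j\left(A_j(\qma_j+\qma_j^*)-B_j(\qmb_j+\qmb_j^*)\right)+(\text{real scalars})$, where $A_j=\Re\SP{\beta}{\qma_j\beta}$, $B_j=\Re\SP{\beta}{\qmb_j\beta}$. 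Since the operator commutator of $K(\beta)$ with $\qma_i$ equals $\qma_i-A_i$, every solution satisfies
\begin{equation*}
\frac{\intd}{\intd t}\SP{\beta}{\qma_i\beta}
=\frac{1}{\mi}\SP{\beta}{\left(\qma_i-A_i\right)\beta}
=\Im\SP{\beta}{\qma_i\beta},
\qquad\text{hence}\qquad
\dot A_i=C_i,\quad \dot C_i=0,
\end{equation*}
with $C_i=\Im\SP{\beta}{\qma_i\beta}$ (likewise $\dot B_i=-D_i$, $\dot D_i=0$). This is precisely the free center‑of‑mass motion behind Proposition \ref{prop:harmonic:solutions}: the mean field cancels the harmonic force exactly, so ``decoupled'' means ballistic, not oscillatory, and the coordinates $A_i,B_i$ stay zero only if the momentum‑type moments $C_i,D_i$ also vanish initially --- which $\perB{\Pi\mathring{\beta}}$ does not enforce. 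Concretely, take $\ddim=1$, $\mathring{\beta}=\ket{0,1}$ (so $N=1$), $\tilde{\beta}=\mi\ket{1,1}\in\fock{0}$, $L=1$. Then $\qma\mathring{\beta}=0$, $\qma^*\mathring{\beta}=\ket{1,1}$, $\qmb\mathring{\beta}=\ket{0,0}$, $\qmb^*\mathring{\beta}=\sqrt{2}\,\ket{0,2}$, so every inner product in the definition of $\perB{\Pi\mathring{\beta}}$ is either $0$ or purely imaginary: all constraints hold, and $\tilde{\beta}$ is a unit vector with $\Del{}\HamVec{\PHHF}(\Pi\mathring{\beta})\tilde{\beta}=\mi\tilde{\beta}$, so the pair $(\Sph{1}\tilde{\beta},1)$ lies in the theorem's domain. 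Yet $\SP{\beta_\gamma}{\qma\,\beta_\gamma}=-\frac{\mi}{2}\sin\gamma$, so $C(0)\neq0$ for $\gamma\in(0,\pi)$, and along the true solution $\SP{\beta(t)}{q\,\beta(t)}=\sqrt{2}\,A(t)=-\frac{t}{\sqrt{2}}\sin\gamma$ grows linearly and unboundedly. Since $\SP{\beta}{q\beta}$ is invariant under the global phase, $\Pi\beta(t)$ never returns to $\Pi\beta_\gamma$: the circle you (and the theorem) assign to this datum is not a trajectory, and no periodic family bifurcates in this direction. Note the linearization does show the eigenvalue $\mi$; the drift is a nonlinear effect of the same order as the perturbation, which is exactly why it destroys the bifurcation.

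So your proof cannot be completed as written for $L=\pm1$. In fact the example shows the hole sits in the literal statement and in the paper's one‑line derivation as well: Corollary \ref{cor:harmonic:interpolating-families} assumes that $\CN\mathring{\beta}\oplus\CN\tilde{\beta}$ is \emph{centered}, and for adjacent eigenvalues this requires the full complex orthogonality $\SP{\tilde{\beta}}{\qma_i\mathring{\beta}}=\SP{\tilde{\beta}}{\qma_i^*\mathring{\beta}}=\SP{\tilde{\beta}}{\qmb_i\mathring{\beta}}=\SP{\tilde{\beta}}{\qmb_i^*\mathring{\beta}}=0$, which is strictly stronger than membership in $\perB{\Pi\mathring{\beta}}$. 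Your argument (and the theorem) become correct if one either restricts the domain to $\abs{L}\geq2$ or strengthens the hypothesis on $\tilde{\beta}$ to this complex orthogonality, equivalently to centeredness of $\CN\mathring{\beta}\oplus\CN\tilde{\beta}$.
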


\begin{proof}
This is a consequence of the explicit computations of the next section. It is immediately implied by Corollary \ref{cor:harmonic:interpolating-families} and illustrated in Figure \ref{fig:harmonic:phase-portrait}.
\end{proof}

\subsection{Algebraic computation of periodic solutions}

\label{sec:harmonic:algebraic}

\noindent The highly resonant spectrum $\mi\IN\subseteq\Spec{\Del{}\HamVec{\PHHF}(\Pi\beta)}$ at the listed relative equilibria of Section \ref{sec:harmonic:spectrum} is a strong obstacle to apply classical bifurcation theory to prove the existence of periodic families around them. However, the system fortunately admits explicit computation of periodic solutions, because it possesses a lot of \textit{finite-dimensional} subspheres in $\Sph{\Lp{2}{\bw}}$ which are left invariant by the dynamics. They are the key to computing a rich variety of periodic solutions. The spheres are constructed as intersections of $\Sph{\Lp{2}{\bw}}$ with finite-dimensional complex subspaces of $\Lp{2}{\bw}$.

\begin{defn}[Centered subspaces]
\label{defn:harmonic:centered-subspaces}
Let $W\subset\Lp{2}{\bw}$ be a finite-dimensional complex subspace. $W$ is called \textbf{centered} if
\begin{enumerate}[label=(\roman*)]
\item $W = \bigoplus_{N\in\IN} W_N$, where $W_N \subseteq \fock{N} = \ker\left(\qmN-N\right)$, (or equivalently $\qmN(W)\subseteq W$),
\item $\forall \beta\in W$, $\forall 1\leq i\leq\ddim$: $\Re\SP {\beta}{\qma_i\beta} = \Re\SP{\beta}{\qmb_i\beta} = 0$.
\end{enumerate}
The number of non-zero subspaces $W_N$ in the decomposition will be called \textbf{decomposition index} of $W$, i.e.,
\begin{equation*}
\InDec{W} \equiv \#\left\{N\in\IN: \dim_\CN W_N > 0\right\} \in \NN_0\cup \{\infty\}.
\end{equation*}
\end{defn}

\begin{exam}
\label{ex:harmonic:centered-subspaces}
\textbf{(i).} Any $W=\bigoplus_{N\in\IN} W_N$, s.t. $W_N, W_M\neq \{\bn\} \Rightarrow \abs{N-M} \neq 1$, is centered. For $\beta=\sum_N \beta_N$, one computes using $\qma_i(\fock{N})\subseteq\fock{N+1}$ and orthogonality of the $\fock{N}$
\begin{equation*}
\Re\SP{\beta}{\qma_i \beta} = \sum_{M,N\in\IN} \Re\SP{\beta_N}{\qma_i\beta_M} = \sum_{N\in\IN} \Re\SP{\beta_N}{\qma_i\beta_{N-1}} = 0.
\end{equation*} \\

\noindent \textbf{(ii).} If $\beta=\sum_{N\in\IN} \beta_N$ is contained in some centered subspace $W=\bigoplus_{N\in\IN} W_N$, then necessarily $\CN\beta_N\subseteq W_N$ by the vector space property of the $W_N$, hence $\bigoplus_{N\in\IN} \CN\beta_N\subseteq W$. Also $\bigoplus_{N\in\IN} \CN\beta_N$ obviously complies with the decomposition condition. The second condition is clearly true for any subset of $W$. Therefore, $\bigoplus_{N\in\IN} \CN\beta_N$ is the minimal centered subspace containing $\beta$.
\end{exam}

\noindent Although it seems a little technical at first, condition \ref{defn:harmonic:centered-subspaces}-(ii) is actually very natural. In fact,
\begin{equation*}
\Re\SP{\beta}{\qma_i\beta} = \frac{1}{\sqrt{2}} \SP{\beta}{q_i~\beta} \quad \text{and} \quad \Re\SP{\beta}{\qmb_i\beta} = \frac{1}{\sqrt{2}} \SP{\beta}{p_i~\beta},
\end{equation*}
therefore this condition simply allows to cancel the translation invariance in both $(\bq,\bp)$ and centering them at $(\bn,\bn)$.

One quickly checks that the family of centered subspaces is $\cap$-stable. This admits the following definition.

\begin{defn}[Oscillation index]
\label{defn:harmonic:oscillation-index}
Let $\beta\in\Sph{\Lp{2}{\bw}}$ be a function. The \textbf{oscillation index} of $\beta$ is
\begin{equation*}
\InOsc{\beta} \equiv \inf\left\{\InDec{W}: W\subset\Lp{2}{\bw}~ \text{centered},~ \beta\in W\right\} \in \NN_0\cup \{\infty\}.
\end{equation*}
\end{defn}

\noindent The following Lemma shows that the centered subspaces actually encode some \textit{hidden} conservation laws, because their unit spheres are stable under the dynamics.

\begin{lem}[$\Sph{\Lp{2}{}}$ vector field]
\label{lem:harmonic:vector-field}
Let $\YV: \domN \to \Lp{2}{\bw}$ be the vector field defined by
\begin{align*}
\YV(\beta)
=&~ \frac{1}{\mi} \sum_{i=1}^{\ddim} \left(\qmb_i^* \qmb_i + \frac{1}{2}\Re\SP{\beta}{\left(\qmb_i\qmb_i+\qmb_i^*\qmb_i\right) \beta} - \Re\SP{\beta}{\qmb_i \beta} \left(\qmb_i+\qmb_i^*\right)\right) \beta \\
&- \frac{1}{\mi} \sum_{i=1}^{\ddim} \left(\qma_i^* \qma_i + \frac{1}{2} \Re\SP{\beta}{\left(\qma_i\qma_i+\qma_i^*\qma_i\right) \beta} - \Re\SP{\beta}{\qma_i \beta} \left(\qma_i+\qma_i^*\right)\right) \beta.
\end{align*}
Then the following claims hold
\begin{enumerate}[label=(\roman*)]
\item $\YV \equiv \HamVec{\HHF}$ on $\domN\cap\Sph{\Lp{2}{\bw}}$, hence $\YV: \domN\cap\Sph{\Lp{2}{\bw}} \to T\Sph{\Lp{2}{\bw}}$ is well-defined, and
\item for all centered subspaces $W\subset\Lp{2}{\bw}$ we have $\YV(W)\subseteq W$. In particular, $\YV : \Sph{W} \to T\Sph{W}$ is a smooth vector field.
\end{enumerate}
\end{lem}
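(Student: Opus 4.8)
The proof splits into the two claims as stated. For claim (i), the plan is to show $\YV \equiv \HamVec{\HHF}$ on the unit sphere by comparing $\YV(\beta)$ term-by-term with the algebraic representation \eqref{eqn:harmonic:algebraic-vf} of $\HamVec{\HHF}(\beta)$. First I would observe that on $\Sph{\Lp{2}{\bw}}$ the final summand of \eqref{eqn:harmonic:algebraic-vf} carrying the factor $(\Lpn{\beta}{2}{}^2 - 1)$ vanishes, so it suffices to match the remaining three lines. The key algebraic identity needed is the rewriting of the counting operators: using $\qmb_i\qmb_i + \qmb_i^*\qmb_i^* = (\qmb_i+\qmb_i^*)^2 - (\qmb_i^*\qmb_i + \qmb_i\qmb_i^*)$ together with the canonical commutation relation $[\qmb_i,\qmb_i^*]=1$ (so $\qmb_i\qmb_i^* = \qmb_i^*\qmb_i + 1$), and likewise for $\qma_i$. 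I expect that once these quadratic expressions are expanded, the $\SP{\beta}{\qmN\beta}$ term and the $\Re\SP{\beta}{\sum_i(\qmb_i^*\qmb_i^* - \qma_i\qma_i)\beta}$ term in \eqref{eqn:harmonic:algebraic-vf} reorganize exactly into the $\Re\SP{\beta}{(\qmb_i\qmb_i+\qmb_i^*\qmb_i)\beta}$ and $\qmb_i^*\qmb_i$ (resp.\ $\qma$) contributions appearing in $\YV$. The linear terms $\Re\SP{\beta}{\qmb_i\beta}(\qmb_i+\qmb_i^*)$ already appear verbatim in both expressions, so no work is needed there.

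Once the identity $\YV=\HamVec{\HHF}$ on the sphere is established, well-definedness of $\YV$ as a map into $T\Sph{\Lp{2}{\bw}}$ follows because Proposition \ref{prop:harmonic:pe-symplecticity} (via its restriction to $\Sph{\Lp{2}{\bw}}$) already guarantees $\HamVec{\HHF}(\beta)$ is tangent to the sphere at $\beta$; equivalently, one checks directly that $\Re\SP{\beta}{\YV(\beta)}=0$. The latter is the cleanest route: each $\frac{1}{\mi}(\cdots)\beta$ block is of the form $\frac{1}{\mi}A\beta$ with $A$ self-adjoint (the counting operators $\qmb_i^*\qmb_i$, $\qma_i^*\qma_i$, the multiplication operators $\qmb_i+\qmb_i^*$, $\qma_i+\qma_i^*$, and the real scalars are all self-adjoint), so $\Re\SP{\beta}{\frac{1}{\mi}A\beta} = \Re(\frac{1}{\mi}\SP{\beta}{A\beta}) = \Im\SP{\beta}{A\beta} = 0$ by self-adjointness.

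For claim (ii), the plan is to verify $\YV(W)\subseteq W$ for any centered subspace $W=\bigoplus_N W_N$ by inspecting each summand of $\YV(\beta)$ for $\beta\in W$. The number operators $\qmb_i^*\qmb_i$ and $\qma_i^*\qma_i$ preserve each $\fock{N}$ and hence preserve $W_N\subseteq\fock{N}$; the scalar-coefficient terms merely rescale $\beta$, staying in $W$. The genuinely new point is the linear term $\Re\SP{\beta}{\qmb_i\beta}(\qmb_i+\qmb_i^*)\beta$: here the operator $\qmb_i+\qmb_i^*$ maps $\fock{N}\to\fock{N-1}\oplus\fock{N+1}$ and a priori could leave $W$. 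This is where condition \ref{defn:harmonic:centered-subspaces}-(ii) does the essential work: since $\Re\SP{\beta}{\qmb_i\beta}=\frac{1}{\sqrt2}\SP{\beta}{p_i\beta}=0$ for centered $W$, the entire offending term vanishes, and symmetrically for the $\qma_i$ term. Thus I expect the main obstacle — controlling the coefficient-multiplied terms that shift $N$ by one — to be dissolved precisely by the centering condition, leaving only $\fock{N}$-preserving pieces. The concluding sentence, that $\YV:\Sph{W}\to T\Sph{W}$ is smooth, then follows since $W$ is finite-dimensional (so all the inner products and the vector field are polynomial, hence smooth) and tangency to $\Sph{W}$ is inherited from (i).
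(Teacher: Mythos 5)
Your plan for part (i) is correct and is essentially the paper's own argument: $\YV$ and $\HamVec{\HHF}$ from \eqref{eqn:harmonic:algebraic-vf} differ only by the summand carrying the factor $\left(\Lpn{\beta}{2}{}^2-1\right)$, which vanishes on the sphere. One remark: the matching of the remaining terms does not require the commutation relation $[\qmb_i,\qmb_i^*]=1$ at all; all that is needed is the definition $\qmN=\sum_{i=1}^{\ddim}\left(\qmb_i^*\qmb_i-\qma_i^*\qma_i\right)$ together with the adjointness identity $\Re\SP{\beta}{\qmb_i\qmb_i\beta}=\Re\SP{\beta}{\qmb_i^*\qmb_i^*\beta}$, so your CCR-based rewriting is an unnecessary detour, though not an error. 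Your direct tangency check $\Re\SP{\beta}{\YV(\beta)}=0$ via self-adjointness of each operator block is a clean way to make explicit what the paper leaves implicit.

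Part (ii), however, contains a genuine flaw. You assert that $\qmb_i^*\qmb_i$ and $\qma_i^*\qma_i$ ``preserve each $\fock{N}$ and hence preserve $W_N\subseteq\fock{N}$.'' That inference is invalid --- invariance of a space under an operator does not pass to arbitrary subspaces --- and the conclusion is false in general. For example, in $\ddim=1$ the space $W=W_0=\CN\left(\ket{0,0}+\ket{1,1}\right)\subset\fock{0}$ is centered (it lives in a single level $N=0$, cf.\ Example \ref{ex:harmonic:centered-subspaces}-(i)), yet
\begin{equation*}
\qmb_1^*\qmb_1\left(\ket{0,0}+\ket{1,1}\right) = \ket{1,1} \notin W.
\end{equation*}
In particular, for the minimal centered subspaces $\bigoplus_N\CN\beta_N$ that matter in Theorem \ref{thm:harmonic:periodic-families}, the individual number operators typically do leave $W$. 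What rescues the lemma --- and what the paper's proof actually does --- is that the number operators never act separately in $\YV$: once the centering condition \ref{defn:harmonic:centered-subspaces}-(ii) kills the terms $\Re\SP{\beta}{\qmb_i\beta}\left(\qmb_i+\qmb_i^*\right)\beta$ and $\Re\SP{\beta}{\qma_i\beta}\left(\qma_i+\qma_i^*\right)\beta$ (which you identified correctly as the essential use of centering), the two sums collapse to
\begin{equation*}
\YV(\beta) = \frac{1}{\mi}\left(\qmN + c(\beta)\right)\beta, \qquad c(\beta)\in\RN,
\end{equation*}
and it is the combination $\qmN=\sum_{i=1}^{\ddim}\left(\qmb_i^*\qmb_i-\qma_i^*\qma_i\right)$, not its summands, that preserves $W$: writing $\beta=\sum_N\beta_N$ with $\beta_N\in W_N\subseteq\ker\left(\qmN-N\right)$ gives $\YV(\beta)=\frac{1}{\mi}\sum_N\left(N+c(\beta)\right)\beta_N\in\bigoplus_N\CN\beta_N\subseteq W$. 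This invokes precisely condition (i) of Definition \ref{defn:harmonic:centered-subspaces} ($\qmN(W)\subseteq W$), which your argument never uses. In the example above the cancellation is visible explicitly: $\qmb_1^*\qmb_1\beta$ and $\qma_1^*\qma_1\beta$ both equal $\ket{1,1}$ and subtract to $\qmN\beta=0$. With this repair, your concluding statements on smoothness (polynomial vector field in finitely many coordinates) and tangency to $\Sph{W}$ stand as written.
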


\begin{proof}
\textbf{(i).} Note that $\YV$ equals $\HamVec{\HHF}$ from \eqref{eqn:harmonic:algebraic-vf} up to a term containing $\left(\Lpn{\beta}{2}{\bw}^2-1\right)$ in the product. Thus on $\Sph{\Lp{2}{\bw}}$ they are the same. \\

\noindent \textbf{(ii).} Let $W\subset\Lp{2}{\bw}$ be a finite-dimensional centered subspace. Pick any $\beta = \sum_N \beta_N \in W$ by the decomposition of $W$, in particular just finitely many $\beta_N\neq 0$. We then compute by the condition (ii) of Definition \ref{defn:harmonic:centered-subspaces}
\begin{align*}
\YV(\beta) =&~ \frac{1}{\mi} \sum_{N\in\IN} \left(\qmN + \frac{1}{2} \SP {\beta} {\qmN\beta}\right)\beta_N + \frac{1}{\mi} \left(\frac{1}{2} \Re\SP{\beta} {\sum_{i=1}^{\ddim}\left(\qmb_i^*\qmb_i^* - \qma_i\qma_i\right) \beta}\right) \beta \\
=&~ \frac{1}{\mi} \sum_{N\in\IN} \left(N + \frac{1}{2} \SP{\beta}{\qmN\beta} + \frac{1}{2} \Re \sum_{M\in\IN} \SP{\beta_M}{\left(\sum_{i=1}^{\ddim} \left(\qmb_i^*\qmb_i^* - \qma_i\qma_i\right)\right)\beta_{M-2}}\right) \beta_N \\
\in&~ \sum_{N\in\IN} \mi\RN\beta_N \subseteq W.
\end{align*}
As the vector field is in fact only a polynomial in the (finitely many) coefficients $\beta_N$, its smoothness restricted on $W$ is immediate.
\end{proof}

\noindent In order to learn more about the value of the symmetry reduction, we compute solutions not only in the quotient manifold $\Sph{\Lp{2}{}}/\Sph{1}$ but also in the sphere $\Sph{\Lp{2}{}}$. 

The given example of Harmonic Vlasov already proves that many trajectories are only closed by passing on to the quotient, highlighting the value of the symmetry reduction. An inspection of the next proof even shows that the global phase oscillation is not generally separable by an $e^{\mi \lambda t}$ ansatz, emphasizing the necessity of the reduction method over this ansatz.

We remind the reader that \textit{relatively periodic} means periodic in the quotient manifold.

\begin{thm}[Relatively periodic families of Harmonic Hartree]
\label{thm:harmonic:periodic-families}
Let $\mathring{\beta}\in\Sph{\Lp{2}{\bw}}$ have a finite oscillation index $\InOsc{\beta}=J\in\NN$ and be contained in the minimal centered subspace $W=\oplus_N W_N$. Then $\mathring{\beta}$ gives rise to a global solution of \eqref{eqn:harmonic:harmonic-hartree} which always remains in $\Sph{W}$, is relatively periodic for $J>1$, and relatively constant for $J=1$.

In the case of $J\geq2$ the relative period is given by
\begin{align*}
\frac{2\pi} 
{\gcd\left(\left\{\abs{N-M} : \dim_\CN W_M, \dim_\CN W_N >0\right\}\right)}.
\end{align*}
In particular, the solution is classically periodic if and only if
\begin{equation*}
\sum_{i=1}^{\ddim} \SP{\beta} {\left(\qmb_i^*\qmb_i-\qma_i^*\qma_i\right) \beta} = \SP{\beta}{\qmN\beta} \in \QN.
\end{equation*}
The classical period is a multiple of the relative period.
\end{thm}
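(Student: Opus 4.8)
The plan is to reduce everything to the dynamics on the finite-dimensional invariant sphere $\Sph{W}$ provided by Lemma \ref{lem:harmonic:vector-field}-(ii), and then solve the resulting ODE essentially in closed form. First I would observe that since $\mathring\beta$ lies in the minimal centered subspace $W=\oplus_N W_N$ and $\YV(W)\subseteq W$, the solution starting at $\mathring\beta$ never leaves $\Sph{W}$; thus the infinite-dimensional problem \eqref{eqn:harmonic:harmonic-hartree} collapses to a smooth ODE on the compact manifold $\Sph{W}$, whose global existence is automatic. The key computation is already visible in the proof of Lemma \ref{lem:harmonic:vector-field}-(ii): on a centered subspace the vector field acts componentwise as $\YV(\beta)=\frac{1}{\mi}\sum_N \mu_N(\beta)\,\beta_N$ with each $\mu_N(\beta)\in\RN$. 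The crucial point to extract is that the ``nonlinear'' scalar coefficients simplify on $\Sph{W}$: writing $\beta=\sum_N\beta_N$, the term $\frac12\SP{\beta}{\qmN\beta}$ and the double-ladder term are the \emph{same} real constant for every component, and only the diagonal $\qmN$ eigenvalue $N$ varies with the component index. Hence I expect $\mu_N(\beta(t)) = N + c(\beta(t))$ where $c$ is a single real scalar independent of $N$.

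Next I would show this scalar $c$ is in fact a conserved quantity along the flow, so each component evolves by a pure phase. Concretely, on $\Sph{W}$ the dynamics decouples as $\partial_t\beta_N = \frac{1}{\mi}(N+c)\,\beta_N$, and because every component rotates by a phase, the moduli $\Lpn{\beta_N}{2}{\bw}$ are constant; this forces $\SP{\beta}{\qmN\beta}=\sum_N N\Lpn{\beta_N}{2}{\bw}^2$ to be constant, and a parallel argument handles the double-ladder contribution, so $c$ is indeed constant. Granting this, the solution is explicitly $\beta_N(t)=e^{-\mi(N+c)t}\mathring\beta_N = e^{-\mi c t}\,e^{-\mi N t}\mathring\beta_N$. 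Passing to the quotient $\Sph{\Lp{2}{\bw}}/\Sph{1}$ removes the global phase factor $e^{-\mi c t}$, leaving the \emph{profile} $\Pi\beta(t)$ governed purely by the relative phases $e^{-\mi N t}$ among the active components. The relative solution is constant precisely when only one $W_N$ is nonzero, i.e. $J=1$, and otherwise returns to its starting point exactly when $e^{-\mi(N-M)t}$ is simultaneously trivial for all active index pairs, which happens at the smallest $t>0$ with $t\cdot(N-M)\in 2\pi\IN$ for all such pairs, giving the relative period $2\pi/\gcd\{\abs{N-M}\}$.

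For the final classical-periodicity criterion I would reinsert the global phase $e^{-\mi c t}$ and ask when the \emph{full} curve in $\Sph{\Lp{2}{\bw}}$ (not merely its projection) closes up. The relative motion closes at the relative period $T_{\mathrm{rel}}=2\pi/\gcd\{\abs{N-M}\}$; at that instant each component has additionally accumulated the common phase $e^{-\mi c T_{\mathrm{rel}}}$, so the genuine solution is periodic iff this leftover phase together with the relative return close simultaneously, i.e. iff $c\,T_{\mathrm{rel}}\in 2\pi\QN$ after accounting for the relative return. Identifying the scalar $c$ with (half of) $\SP{\beta}{\qmN\beta}$ via \eqref{eqn:harmonic:algebraic-vf}, this rationality condition becomes exactly $\SP{\beta}{\qmN\beta}\in\QN$, and the classical period is then an integer multiple of $T_{\mathrm{rel}}$.

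The main obstacle I anticipate is not the phase bookkeeping but verifying that the scalar coefficient $\mu_N$ really is of the pure form $N+c(t)$ with a \emph{single} index-independent $c$, and that this $c$ is conserved; this requires carefully checking that the double-ladder term $\frac12\Re\SP{\beta}{\sum_i(\qmb_i^*\qmb_i^*-\qma_i\qma_i)\beta}$ contributes identically to all components and stays constant under the phase flow. Once the componentwise decoupling $\partial_t\beta_N=\frac{1}{\mi}(N+c)\beta_N$ with constant $c$ is established, the remaining arithmetic — the $\gcd$ formula for the relative period and the $\SP{\beta}{\qmN\beta}\in\QN$ criterion for classical periodicity — follows by elementary reasoning about simultaneous return of finitely many rotations, so I would spend the bulk of the argument pinning down that decoupling rigorously.
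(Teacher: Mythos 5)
Your overall architecture matches the paper's: restrict to the invariant finite-dimensional sphere $\Sph{W}$ via Lemma \ref{lem:harmonic:vector-field}, observe that the vector field acts on each component $\beta_N$ by multiplication with $\frac{1}{\mi}$ times an index-independent-plus-$N$ real scalar, and then read off the relative period from the resulting phase arithmetic. However, there is a genuine gap at exactly the point you flagged: your claim that the common scalar $c$ is \emph{conserved} along the flow is false in general. The term $\frac{1}{2}\SP{\beta}{\qmN\beta}$ is indeed constant (your argument for that is fine), but the double-ladder term is not. It couples components whose $\qmN$-indices differ by exactly $2$: writing $\beta_N(t)=e^{\mi\theta_N(t)}\mathring{\beta}_N$ (pure phase evolution per component), one gets
\begin{equation*}
\SP{\beta_M(t)}{\left(\qmb_i^*\qmb_i^*-\qma_i\qma_i\right)\beta_{M-2}(t)}
= e^{\mi\left(\theta_M(t)-\theta_{M-2}(t)\right)}\SP{\mathring{\beta}_M}{\left(\qmb_i^*\qmb_i^*-\qma_i\qma_i\right)\mathring{\beta}_{M-2}},
\end{equation*}
and since $\dot{\theta}_N=-(N+c(t))$, the relative phase satisfies $\theta_M-\theta_{M-2}=-2t$: the inner product rotates at frequency $2$ rather than staying constant. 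So your ``parallel argument'' (constancy of the moduli implies constancy of the double-ladder term) breaks down, because this term depends on \emph{relative phases between different components}, not on moduli. This is not a degenerate corner case: the paper's own Example 3.14, built from $\mathring{\beta}_{0,0}$ and $\mathring{\beta}_{2,0}$ (eigenvalues $0$ and $-2$ of $\qmN$), has a nonvanishing, genuinely oscillating double-ladder term; your closed form $\beta_N(t)=e^{-\mi(N+c)t}\mathring{\beta}_N$ with constant $c$ does not solve the equation there.

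The repair is the route the paper takes: accept that the scalar is time-dependent but still index-independent, and absorb it into a \emph{common} phase. Concretely, the ansatz $\beta_N(t)=\mathring{\beta}_N\exp\left(-\mi\left(N+\tfrac{1}{2}\langle\qmN\rangle\right)t-\mi\varphi(t)\right)$ reduces the system to the single scalar ODE $\dot{\varphi}(t)=\tfrac{1}{2}\Re\left(e^{2\mi t}K\right)$, $\varphi(0)=0$, with $K$ a constant determined by $\mathring{\beta}$; its explicit solution is $\pi$-periodic. Because $\varphi$ is common to all components, it is killed by the $\Sph{1}$ quotient, so your conclusions about the relative dynamics (relative constancy for $J=1$, relative period $2\pi/\gcd\left\{\abs{N-M}\right\}$ for $J\geq 2$) survive unchanged. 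For the classical-periodicity criterion you must additionally use the $\pi$-periodicity of $\varphi$: whenever the double-ladder term is nonzero, two active indices differ by $2$, forcing $\gcd\in\{1,2\}$ and hence the relative period to be a multiple of $\pi$, so $\varphi$ contributes no leftover phase after a relative period and the condition reduces, as you anticipated, to $\langle\qmN\rangle\in\QN$. Without introducing $\varphi$, though, the decoupling you planned to ``pin down rigorously'' cannot be established, because as stated it is not true.
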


\begin{proof}
Let $\mathring{\beta} = \sum_N \mathring{\beta}_N$ be a decomposition according to the minimal centered space $W$ containing $\mathring{\beta}$. Let $\mathcal{N}\equiv \left\{N\in\IN: \dim_\CN W_N >0\right\}$ denote the finite set of indices with non-trivial $W_N$. By Lemma \ref{lem:harmonic:vector-field}, the initial value problem
\begin{align*}
\partial_t\beta(t) = \YV\left(\beta(t)\right), \quad \beta(0) = \mathring{\beta} \in \Sph{W},
\end{align*}
is well-posed on the finite-dimensional compact manifold $\Sph{W}\simeq \Sph{2J-1}\subset \CN^J$ and possesses by the smoothness and boundedness of $\YV$ on $\Sph{W}$ a global smooth solution. By Lemma \ref{lem:harmonic:vector-field}-(i), this is actually a solution to \eqref{eqn:harmonic:harmonic-hartree}.

As seen in Example \ref{ex:harmonic:centered-subspaces}-(ii), minimality of $W$ already implies the strict form $W_N=\CN\mathring{\beta}_N$. Hence, one rewrites the equations of motion in the orthogonal decomposition
\begin{align}
\nonumber
\forall N\in\mathcal{N}: \quad \mi~ \dot{\beta}_N(t) =&~ \left(N + \frac{1}{2} \SP{\beta(t)}{\qmN\beta(t)}\right) \beta_N(t) \\
\label{eqn:harmonic:coordinate-hht}
&+ \left(\frac{1}{2} \Re \sum_{M,M-2\in\mathcal{N}} \SP{\beta_M(t)}{\left(\sum_{i=1}^{\ddim} \left(\qmb_i^*\qmb_i^* - \qma_i\qma_i\right)\right)\beta_{M-2}(t)} \right) \beta_N(t).
\end{align}
Now by \eqref{eqn:harmonic:coordinate-hht} we have $\dot{\beta}_N \in \mi\RN \beta_N$, thus the $\Lp{2}{}$ norms of all $\beta_N$ must be conserved and they are $\Lpn{\mathring{\beta}_N}{2}{\bw}$ for $t=0$. In particular, there is only phase oscillation inside $W_N$. Consequently, $t\mapsto \SP{\beta(t)}{\qmN\beta(t)} \equiv \langle\qmN\rangle$ is constant. We therefore choose the ansatz
\begin{equation*}
\forall N\in\mathcal{N}: \quad \beta_N(t) = \mathring{\beta}_N~ \exp\left(-\mi\left(N+\frac{1}{2}\langle\qmN\rangle\right) t - \mi\varphi(t)\right), \quad \varphi: \RN \to \RN, \quad \varphi(0) = 0,
\end{equation*}
that reduces the full system of ODEs to the single ODE
\begin{equation*}
\dot{\varphi}(t) = \frac{1}{2} \Re e^{2\mi t} \sum_{M\in\IN} \SP{\mathring{\beta}_M}{\left(\sum_{i=1}^{\ddim} \left(\qmb_i^*\qmb_i^* - \qma_i\qma_i\right)\right)\mathring{\beta}_{M-2}}, \quad \varphi(0) = 0,
\end{equation*}
which possesses a unique global solution which is given by a linear combination of $\sin(2t)$ and $\cos(2t)$.

We remark that the common phase $\frac{1}{2}\langle\mathfrak{N}\rangle t+\varphi(t)$ is canceled by the $\Sph{1}$ modulo operation. The minimal common relative period of the $t\mapsto\beta_N(t)$ is now found by the formula given in the theorem. In the special case of $J=1$, the dynamics is only phase oscillation and therefore relatively constant.

Given that $\varphi$ has period $\pi$, $\langle\qmN\rangle\in\QN$ is equivalent to the solution being classically periodic.
\end{proof}

\begin{thm}[Topological characterization of periodic orbits]
\label{thm:harmonic:topological-version}
Let $W\subset\Lp{2}{\bw}$ be a centered subspace of finite dimension $I = \dim_\CN W \in\NN$. The vector field
\begin{equation*}
\YV: \Sph{W} \to T\Sph{W}
\end{equation*}
from Lemma \ref{lem:harmonic:vector-field} is equivariant under $\Sph{1}$ action and therefore can be pushed forward under the projection
\begin{equation*}
\Pi: \Sph{W} \to \Sph{W}/\Sph{1} \simeq \CN P^{I-1}.
\end{equation*}
The projection of each trajectory of \eqref{eqn:harmonic:harmonic-hartree} through $\beta\in\Sph{W}$ onto $\Sph{W}/\Sph{1}$ is closed and has constant velocity
\begin{equation*}
v\left(\beta\right)
\equiv \sqrt{\SP{\beta}{\qmN^2\beta} - \SP{\beta}{\qmN\beta}^2}
\end{equation*}
The projections of two trajectories either coincide or are disjoint.
\end{thm}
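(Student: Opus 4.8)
The plan is to split the argument into three parts: equivariance (so that $\YV$ descends), closedness together with the velocity formula (where the real work lies), and the partition property (routine). First I would verify the $\Sph{1}$-equivariance of $\YV$ directly from its definition in Lemma \ref{lem:harmonic:vector-field}. Every scalar coefficient appearing there — $\SP{\beta}{\qmN\beta}$, the $\Re\SP{\beta}{\qma_i\beta}$ and $\Re\SP{\beta}{\qmb_i\beta}$, and the quadratic terms — is invariant under $\beta\mapsto\zeta\beta$ for $\zeta\in\Sph{1}$, while the operators $\qmN,\qma_i,\qmb_i$ commute with multiplication by $\zeta$; hence $\YV(\zeta\beta)=\zeta\,\YV(\beta)$. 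Since $\Sph{W}\simeq\Sph{2I-1}$ and the $\Sph{1}$-action is free and smooth, $\YV$ pushes forward to a well-defined smooth vector field on $\Sph{W}/\Sph{1}\simeq\CN P^{I-1}$, and by Lemma \ref{lem:harmonic:vector-field}-(ii) trajectories through $\Sph{W}$ remain in $\Sph{W}$.

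Closedness of the projected trajectories I would deduce directly from Theorem \ref{thm:harmonic:periodic-families}: any $\beta\in\Sph{W}$ has finite oscillation index $J\le I$ and lies in a minimal centered subspace, so its trajectory is relatively periodic (for $J>1$) or relatively constant (for $J=1$); in either case its image in $\Sph{W}/\Sph{1}$ is a closed curve.

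The heart of the argument is the velocity formula. From the coordinate equations of motion \eqref{eqn:harmonic:coordinate-hht} in the proof of Theorem \ref{thm:harmonic:periodic-families} one reads off that each component $\beta_N$ evolves only by a phase, so along a trajectory
\begin{equation*}
\dot\beta = \frac{1}{\mi}\left(\qmN + \mu(t)\right)\beta, \qquad \mu(t)\in\RN.
\end{equation*}
The quotient $\CN P^{I-1}$ carries the Fubini--Study metric obtained as a Riemannian submersion from $\Sph{2I-1}$, so by the chart of Lemma \ref{lem:harmonic:coordinate-representations}-(iii) the speed of the projected curve equals the norm of the horizontal part of $\dot\beta$, namely $\|\dot\beta - \SP{\dot\beta}{\beta}\beta\|$. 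A short computation using self-adjointness of $\qmN$ and $\mu\in\RN$ then gives
\begin{align*}
\left\|\dot\beta - \SP{\dot\beta}{\beta}\beta\right\|^2
&= \|\dot\beta\|^2 - \left|\SP{\dot\beta}{\beta}\right|^2 \\
&= \left(\SP{\beta}{\qmN^2\beta} + 2\mu\SP{\beta}{\qmN\beta} + \mu^2\right) - \left(\SP{\beta}{\qmN\beta} + \mu\right)^2 \\
&= \SP{\beta}{\qmN^2\beta} - \SP{\beta}{\qmN\beta}^2,
\end{align*}
in which the scalar shift $\mu$ cancels completely; this is precisely $v(\beta)^2$. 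Since every $\Lpn{\beta_N}{2}{\bw}$ is conserved by \eqref{eqn:harmonic:coordinate-hht}, both $\SP{\beta}{\qmN\beta}=\sum_N N\Lpn{\beta_N}{2}{\bw}^2$ and $\SP{\beta}{\qmN^2\beta}=\sum_N N^2\Lpn{\beta_N}{2}{\bw}^2$ are constant along the trajectory, so $v$ is constant.

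Lastly, the dichotomy — that two projected trajectories coincide or are disjoint — is the standard fact that the integral curves of the smooth (hence locally Lipschitz) pushed-forward vector field on the compact manifold $\CN P^{I-1}$ partition it, uniqueness of solutions forbidding two distinct orbits from meeting. The step I expect to require the most care is the velocity computation, specifically confirming that the relevant metric downstairs is the Fubini--Study metric realized as a Riemannian submersion, so that the speed equals the horizontal-component norm; once the phase-only evolution $\dot\beta=\frac{1}{\mi}(\qmN+\mu)\beta$ is in hand, the cancellation of $\mu$, the identification of $v$ with the variance of $\qmN$, and the conservation of $v$ are purely algebraic.
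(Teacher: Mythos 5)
Your proposal is correct and follows essentially the same route as the paper: equivariance checked directly from the formula for $\YV$, closedness imported from Theorem \ref{thm:harmonic:periodic-families}, the velocity computed as the norm of the horizontal part $\dot\beta - \SP{\dot\beta}{\beta}\beta$ using the chart of Lemma \ref{lem:harmonic:coordinate-representations}, and disjointness from ODE uniqueness. The only (harmless) difference is stylistic: you write the evolution in operator form $\dot\beta = \tfrac{1}{\mi}(\qmN+\mu(t))\beta$ so that the real scalar $\mu$ cancels algebraically, whereas the paper expands component-wise in the $\beta_N$ and obtains the same variance expression $\SP{\beta}{\qmN^2\beta} - \SP{\beta}{\qmN\beta}^2$.
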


\begin{proof}
Let $W\subset\Lp{2}{\bw}$ be a centered subspace. The $\Sph{1}$ equivariancy of $\YV$ follows from
\begin{equation*}
\forall \zeta\in\Sph{1}~ \forall\beta\in\Sph{W}: \quad \inv{\zeta}~ \YV(\zeta~\beta) = \YV(\beta).
\end{equation*}
Let $\Pi_*\YV$ denote the push-forward. In particular, for solutions of \eqref{eqn:harmonic:harmonic-hartree}
\begin{equation*}
\partial_t\beta = \YV(\beta) \quad \Rightarrow \quad \partial_t\Pi\beta = \intd\Pi_\beta(\partial_t\beta) = \intd\Pi_\beta(\YV(\beta)) = \left(\Pi_*\YV\right)\left(\Pi\beta\right).
\end{equation*}
We conclude that trajectories in $\Sph{W}/\Sph{1}$ cannot intersect because they solve a first order autonomous ODE with smooth vector field. Their closedness was shown in Theorem \ref{thm:harmonic:periodic-families}. 

It remains to compute the velocity in the metric of the surrounding space. Let $\beta\in\Sph{W}$ be given. We can model the differential of $\Pi$ at $\beta $ by the charts of Lemma \ref{lem:harmonic:coordinate-representations}-(i) as
\begin{equation*}
\intd\Pi_\beta : \begin{array}{ccc}
T_{\beta}\Sph{W} = \left\{\gamma\in W: \Re\SP{\beta}{\gamma} = 0\right\} & \to & T_{\Pi\beta}\left(\Sph{W}/\Sph{1}\right) = \left\{\gamma\in W: \SP{\beta}{\gamma} = 0\right\} \\
\gamma & \mapsto & \gamma - \SP{\gamma}{\beta} \beta.
\end{array}
\end{equation*}
This yields for the velocity of a trajectory $t\mapsto \beta(t) \in\Sph{W}$ in $\Sph{W}/\Sph{1}$ the following expression:
\begin{align*}
\norm{\partial_t \Pi\beta}_{T_{\Pi\beta}\Sph{W}/\Sph{1}}^2 
=&~ \abs{\intd\Pi_\beta(\partial_t\beta)}_{T_{\Pi\beta}\Sph{W}/\Sph{1}}^2 
= \Lpn{\dot{\beta}-\SP{\dot{\beta}}{\beta} \beta}{2}{\bw}^2 
= \Lpn{\dot{\beta}}{2}{\bw}^2 - \abs{\SP{\dot{\beta}}{\beta}}^2 \\
=&~ \Lpn{\YV(\beta)}{2}{\bw}^2 - \abs{\SP{\YV(\beta)}{\beta}}^2 = (*).
\end{align*}
Using the representation of solutions $\beta=\sum_N \beta_N$ from Theorem \ref{thm:harmonic:periodic-families} and the representation of $\YV$ from the proof of Lemma \ref{lem:harmonic:vector-field}, one computes
\begin{align*}
(*)
=&~ \sum_{N\in\IN} \Lpn{\beta_N}{2}{\bw}^2 \left(N + \frac{1}{2} \SP{\beta}{\qmN\beta} + \frac{1}{2} \Re \sum_{M\in\IN} \SP{\beta_M}{\left(\sum_{i=1}^{\ddim} \left(\qmb_i^*\qmb_i^* - \qma_i\qma_i\right)\right)\beta_{M-2}} \right)^2 \\
&-~ \left(\sum_{N\in\IN} \Lpn{\beta_N}{2}{\bw}^2 \left(N + \frac{1}{2} \SP{\beta}{\qmN\beta} + \frac{1}{2} \Re \sum_{M\in\IN} \SP{\beta_M}{\left(\sum_{i=1}^{\ddim} \left(\qmb_i^*\qmb_i^* - \qma_i\qma_i\right)\right)\beta_{M-2}} \right)\right)^2 \\
=&~ \SP{\beta}{\qmN^2\beta} - \SP{\beta}{\qmN\beta}^2.
\end{align*}
\end{proof}

\begin{figure}[htp]
\centering

\begin{tikzpicture} 

\def\R{2.5} 
\def\angEl{25} 
\def\angBeta{-20}
\def\angGamma{70}
\pgfmathsetmacro\H{\R*cos(\angEl)} 
\filldraw[ball color=white] (0,0) circle (\R);
\foreach \t in {-80,-60,...,80} { \DrawLatitudeCircle[\R]{\t} }

\LongitudePlane[pzplane]{\angEl}{\angBeta}

\DrawLongitudeHalfCircle[\R]{\angBeta} 

\coordinate[mark coordinate] (N) at (0,\H);
\coordinate[mark coordinate] (S) at (0,-\H);
\path[pzplane] (90-\angGamma:\R) coordinate[mark coordinate] (P);
\coordinate (O) at (0,0);
\draw[->] (O) -- (P);
\draw[pzplane,->,thin] (90:0.5*\R) to[bend left=35]
    node[midway,above right] {$\gamma$} (90-\angGamma:0.5*\R);
\draw (0,-\H) -- (0,\H);

\node [above] at (0,1.1*\H) {$\Sph{1}\beta_N$};
\node [below] at (0,-1.1*\H) {$\Sph{1}\beta_M$};
\node [right] at (\R*1.1,\R*0.1) {$\Sph{1}\left(\cos\frac{\gamma}{2}~ \beta_N + \sin\frac{\gamma}{2}~ \beta_M\right)$};

\end{tikzpicture}
\caption{\label{fig:harmonic:phase-portrait} The phase portrait of interpolating families given in Corollary \ref{cor:harmonic:interpolating-families}. For representation issues a diffeomorphic map between $\Sph{W}/\Sph{1} \simeq \CN P^1 \simeq \Sph{2}\subset\RN^3$ was chosen. The north and south pole are relatively constant solutions, along the latitudes flow the relatively periodic solutions with $\Pi_*\YV$, whose trajectories can be parameterized by the angle $\gamma\in(0,\pi)$.}
\end{figure}
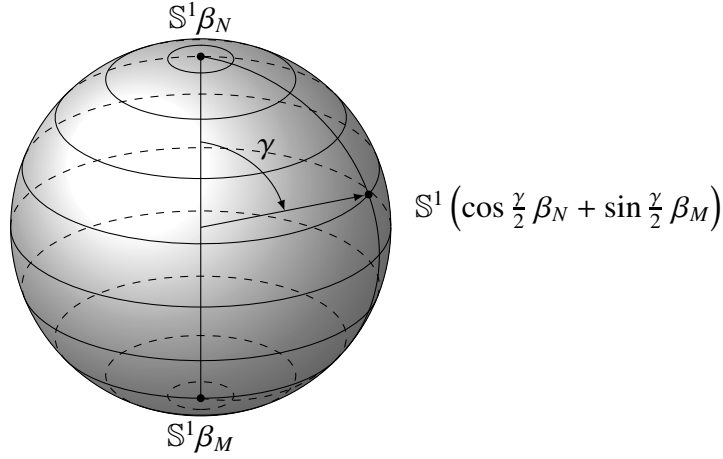

\begin{cor}[Interpolating families]
\label{cor:harmonic:interpolating-families}
Let $\beta_N,\beta_M\in\Sph{\Lp{2}{\bw}}$ be two eigenvectors of the linear excitation operator $\qmN$ with different eigenvalues $M\neq N$, s.t. in addition $W\equiv\CN\beta_M\oplus\CN\beta_N$ is centered. Then there exists a continuous family of relatively periodic solutions of the phase equivariant Harmonic Hartree \eqref{eqn:harmonic:pe-hamilton-hartree}, s.t. all trajectories have the same period.
\end{cor}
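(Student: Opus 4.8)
The plan is to realize the asserted family as the one-parameter meridian joining the two relative equilibria $\Pi\beta_N$ and $\Pi\beta_M$ of Lemma~\ref{lem:harmonic:stationary-points}, exactly the picture of Figure~\ref{fig:harmonic:phase-portrait}. Concretely, for $\gamma\in(0,\pi)$ I would take the initial datum
\[
\beta(\gamma) \equiv \cos\tfrac{\gamma}{2}~\beta_N + \sin\tfrac{\gamma}{2}~\beta_M .
\]
Since $\beta_N,\beta_M$ are eigenvectors of the self-adjoint operator $\qmN$ to the distinct eigenvalues $N\neq M$, they are orthogonal, so $\Lpn{\beta(\gamma)}{2}{\bw}=1$ and each $\beta(\gamma)$ lies in $\Sph{W}$.

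First I would fix the oscillation index. For $\gamma\in(0,\pi)$ both coefficients $\cos\tfrac{\gamma}{2}$ and $\sin\tfrac{\gamma}{2}$ are nonzero, so by Example~\ref{ex:harmonic:centered-subspaces}-(ii) the minimal centered subspace containing $\beta(\gamma)$ is precisely $\CN\beta_N\oplus\CN\beta_M=W$, whence $\InOsc{\beta(\gamma)}=2$. Theorem~\ref{thm:harmonic:periodic-families} then applies directly: every $\beta(\gamma)$ generates a global solution of \eqref{eqn:harmonic:harmonic-hartree} that stays in $\Sph{W}$, whose $\Pi$-projection solves \eqref{eqn:harmonic:pe-hamilton-hartree} and is relatively periodic. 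The decisive observation is that the relative period supplied by that theorem depends only on the set of active indices, which here is $\{N,M\}$ for \emph{every} $\gamma\in(0,\pi)$, so it takes the common, $\gamma$-independent value $2\pi/|N-M|$. This settles the ``same period'' claim at one stroke.

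It then remains to verify that $\gamma\mapsto\Pi\beta(\gamma)$ indeed selects a continuous family of pairwise distinct closed orbits. Continuity I would read off from smooth dependence on initial data for the autonomous ODE $\partial_t\beta=\YV(\beta)$ on the finite-dimensional compact sphere $\Sph{W}$ (Lemma~\ref{lem:harmonic:vector-field}), composed with the smooth projection $\Pi$. Distinctness follows from Theorem~\ref{thm:harmonic:topological-version}: as $\tan\tfrac{\gamma}{2}$ is strictly monotone on $(0,\pi)$ the points $\Pi\beta(\gamma)$ are pairwise different in $\Sph{W}/\Sph{1}\simeq\CN P^1$, and two projected trajectories either coincide or are disjoint, so the corresponding orbits (the distinct latitudes of Figure~\ref{fig:harmonic:phase-portrait}) are mutually disjoint.

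Since essentially all the analytic work is delegated to Theorem~\ref{thm:harmonic:periodic-families}, I do not expect a serious obstacle. The only point demanding care is precisely the $\gamma$-independence of the period --- which is why the gcd-formula of Theorem~\ref{thm:harmonic:periodic-families} is phrased in terms of the index set rather than the coefficients --- together with the bookkeeping that the degenerate endpoints $\gamma\in\{0,\pi\}$ collapse to the relatively constant solutions $\Pi\beta_N$ and $\Pi\beta_M$.
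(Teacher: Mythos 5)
Your proposal is correct and follows essentially the same route as the paper: apply Theorem~\ref{thm:harmonic:periodic-families} to the elements of the centered space $W=\CN\beta_N\oplus\CN\beta_M$ (decomposition index $2$), note that the gcd-formula gives the $\gamma$-independent period $2\pi/\abs{M-N}$, and parameterize the family by the angle $\gamma$ as in Figure~\ref{fig:harmonic:phase-portrait}. The extra details you supply (orthogonality of the eigenvectors, continuity via smooth dependence on initial data, disjointness of projected orbits via Theorem~\ref{thm:harmonic:topological-version}) only flesh out what the paper leaves implicit.
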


\begin{proof}
The space $W\equiv\CN\beta_N\oplus\CN\beta_M$ has complex dimension and decomposition index $2$. Therefore, all elements of $W$ which are not a multiple of the eigenvectors lead to relatively periodic solutions with period $\frac{2\pi}{\abs{M-N}}$ by Theorem \ref{thm:harmonic:periodic-families}. The continuous interpolation parameter can be an angle $\gamma$, as illustrated in Figure \ref{fig:harmonic:phase-portrait}.
\end{proof}

\noindent By the hierarchy of equations, we can transform any solution back to the $\Lp{1}{}$ picture of the classical Vlasov system. This leads to a rich variety of periodic solutions, previously difficult to classify.

\begin{thm}[Periodic solutions of Harmonic Vlasov]
Under the chain of transformations
\begin{equation*}
\Lp{2}{\bq,\bp} \stackrel{\tau^*}{\to} \Lp{2}{\bx,\xi} \stackrel{\text{F.T.}}{\to} \Lp{2}{\bx,\bv} \stackrel{\abs{\cdot}^2}{\to} \Lp{1}{\bx,\bv}
\end{equation*}
the set of initial conditions
\begin{equation*}
\Sph{\Lp{2}{\bq,\bp}}\cap \bigcup_{W~ \text{centered}} W
\end{equation*}
leads to non-trivial periodic solutions of the classic Harmonic Vlasov system \eqref{eqn:harmonic:vl}.
\end{thm}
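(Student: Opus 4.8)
The plan is to run the chain of maps backwards on the relatively periodic Hartree solutions already constructed, exploiting that the one symmetry we quotiented out --- the global phase --- is precisely the information annihilated by the final map $\alpha\mapsto\abs{\alpha}^2$, so that relative periodicity in $\Sph{\Lp{2}{\bw}}/\Sph{1}$ upgrades to honest periodicity of the Vlasov density. First I would fix a centered $W$ and a unit vector $\mathring\beta\in\Sph{\Lp{2}{\bw}}\cap W$; replacing $W$ by the minimal centered subspace containing $\mathring\beta$ (Example \ref{ex:harmonic:centered-subspaces}-(ii)) I may assume $W$ is finite-dimensional, so $W\subseteq\domN$. By Lemma \ref{lem:harmonic:vector-field} the field $\YV=\HamVec{\HHF}$ restricts to a smooth bounded vector field on the compact manifold $\Sph{W}$, whence the initial value problem for \eqref{eqn:harmonic:harmonic-hartree} with datum $\mathring\beta$ has a global solution $\beta(t)$ that never leaves $\Sph{W}$; Theorem \ref{thm:harmonic:periodic-families} shows $\beta$ is relatively periodic with a finite relative period $T$ (relatively constant, giving a stationary $f$, when $\InOsc{\mathring\beta}=1$).

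Next I would transport $\beta(t)$ along the chain. Both $\tau^*$ and the inverse partial Fourier transform $\inv{\fourier}$ are linear $\Lp{2}{}$-isometries and symplectomorphisms, so $\alpha(t)\equiv\inv{\fourier}\tau^*\beta(t)$ solves \eqref{eqn:harmonic:hvl} and $f(t)\equiv\abs{\alpha(t)}^2$ solves the classical Harmonic Vlasov equation \eqref{eqn:harmonic:vl} by \cite[Prop.1.1]{neiss}. Since $\mathring\beta$ is a finite combination of Hermite vectors $\ket{\ba,\bb}$, the datum $\mathring f=\abs{\mathring\alpha}^2$ is Schwartz, hence differentiable with finite first moments and unit mass $\int\mathring f=\Lpn{\mathring\beta}{2}{\bw}^2=1$, so the hypotheses of Proposition \ref{prop:harmonic:solutions} hold.

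The decisive step is the upgrade from relative to genuine periodicity. Relative periodicity means $\beta(t+T)=c(t)\,\beta(t)$ for some $c(t)\in\Sph{1}$; since $\YV$ is $\Sph{1}$-equivariant, $t\mapsto c(0)\,\beta(t)$ solves the same autonomous equation with the same datum as $t\mapsto\beta(t+T)$, so uniqueness forces $c(t)\equiv c_0$ constant. As $\inv{\fourier}$ and $\tau^*$ are linear and $\abs{\cdot}^2$ is phase-blind, $f(t+T)=\abs{c_0\,\alpha(t)}^2=f(t)$, so $f$ is genuinely $T$-periodic. This is exactly the payoff of the reduction: the oscillation that only closed the orbit in the quotient is invisible to $f$. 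The solution is non-trivial (of unit mass, hence non-vanishing) and genuinely time-dependent whenever $\InOsc{\mathring\beta}\geq2$.

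I expect the main obstacle to be reconciling this strict periodicity with the barycenter dynamics of \eqref{eqn:harmonic:vl}: for a two-body harmonic interaction the center of mass feels no net force, so $f$ can be periodic rather than merely periodic-up-to-translation only if both $\int\bx\,\mathring f$ and $\int\bv\,\mathring f$ vanish. I would settle this from the centering condition Definition \ref{defn:harmonic:centered-subspaces}-(ii). Because it holds for every $\beta\in W$, testing it on $\beta_N+e^{\mi\theta}\beta_{N+1}$ and varying $\theta$ annihilates the full matrix elements $\SP{\beta_{N+1}}{\qma_i\beta_N}$ and $\SP{\beta_{N+1}}{\qmb_i\beta_N}$, so not just the real but also the imaginary parts of $\SP{\beta}{\qma_i\beta}$ and $\SP{\beta}{\qmb_i\beta}$ vanish; transported back these encode precisely the mean position $\int\bx\,\mathring f$ and mean velocity $\int\bv\,\mathring f$, both of which are therefore zero. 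This pins the barycenter and reconciles the relative period $T$ with the classical period $2\pi$ of Proposition \ref{prop:harmonic:solutions}.
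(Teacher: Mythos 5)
Your proposal is correct and takes essentially the same route as the paper's own proof: reverse-transport the relatively periodic solutions of Theorem \ref{thm:harmonic:periodic-families} through the two symplectomorphisms $\tau^*$ and $\inv{\fourier}$, and note that the global phase --- the only obstruction to closing the orbit in $\Sph{\Lp{2}{\bw}}$ --- is annihilated by $\abs{\cdot}^2$, so relative periodicity becomes genuine periodicity of $f$. Your extra verifications (constancy of the recurrence phase via uniqueness of the ODE flow, and the vanishing of $\int \bx\, \mathring f$ and $\int \bv\, \mathring f$ forced by the centering condition, which rules out barycenter drift) are sound refinements of points the paper leaves implicit.
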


\begin{proof}
Reverse transformation of the periodic curves obtained from Theorem \ref{thm:harmonic:periodic-families} yields the result immediately. In particular the relative periodicity turns into classic periodicity under the last transformation.
\end{proof}

\noindent Actually, many periodic solutions from the previous theorem can be computed very conveniently, if one chooses an initial state of the form
\begin{equation*}
\mathring{\beta} \propto \left(\text{polynomial in $\bq,\bp$}\right) \cdot \exp\left(-\frac{\abs{\bq}^2 + \abs{\bp}^2}{2}\right),
\end{equation*}
as is highlighted by the following example.

\begin{exam}
Choose the space dimension $\ddim=1$ and the two explicit states
\begin{align*}
\mathring{\beta}_{0,0}(q,p) \equiv \pi^{-\frac{1}{2}} e^{-\frac{q^2 + p^2}{2}}, \quad 
\mathring{\beta}_{2,0} \equiv \pi^{-\frac{1}{2}} e^{-\frac{q^2 + p^2}{2}} \frac{1}{\sqrt 2}\left(q^2 - 1\right).
\end{align*}
Using the angular parametrization consistent with Fig. \ref{fig:harmonic:phase-portrait}, we can compute for the time-dependent solutions
\begin{align*}
\Sph{1}\beta_\gamma(t,q,p) =&~ \Sph{1} \left(\cos\frac{\gamma}{2} \beta_{0,0}(q,p) + \sin\frac{\gamma}{2} e^{2\mi t}\beta_{2,0}(q,p)\right), \\
\Sph{1}\VF\alpha_\gamma(t,x,\xi) =&~ \Sph{1} \pi^{-\frac{1}{2}} e^{-\frac{x^2 + \xi^2}{2}}  \left(\cos\frac{\gamma}{2} + \sin\frac{\gamma}{2} e^{2\mi t} \frac{1}{\sqrt{2}}\left(\frac{\left(x+\xi\right)^2}{2}-1\right)\right), \\
\Sph{1}\alpha_\gamma(t,x,v) =&~ \Sph{1} \pi^{-\frac{1}{2}} e^{-\frac{x^2 + v^2}{2}} \left(\cos\frac{\gamma}{2} + \sin\frac{\gamma}{2}~ \frac{1}{2\sqrt 2} \left(x+\mi v\right)^2\right), \\
f_\gamma(t,x,v) =&~ \pi^{-1} e^{-\left(x^2+v^2\right)} \left(\cos^2\frac{\gamma}{2} + \sin^2\frac{\gamma}{2}~ \frac{1}{2} \left(x^2+v^2\right)^2 + \sin\gamma~ \frac{1}{\sqrt {2}}~ \Re e^{2\mi t} \left(x+\mi v\right)^2\right), \\
\rho_\gamma(t,x) =&~ \pi^{-\frac{1}{2}} e^{-x^2} \left(\cos^2\frac{\gamma}{2} + \sin^2\frac{\gamma}{2}~ \frac{1}{2} \left(x^4+x^2+\frac{3}{4}\right) + \sin\gamma~ \sqrt{2} \cos(2t) \left(x^2 - \frac{1}{2}\right)\right).
\end{align*}
\end{exam}

\section{Final comments}

Although the explicit results obtained for the very specific (and physically uninteresting) Harmonic Vlasov system are not very surprising and do not easily generalize to other systems, there are some important lessons to be learnt.

At first, this is the first time that existence of periodic solutions for any classical Vlasov system without boundary constraints could be found. Especially the Hamiltonian formalism of the Hamiltonian Vlasov system has proven its value in this area. 

Secondly, the method of symplectic symmetry reduction for phase equivariant Hamiltonian PDEs seems irreplaceable in the search for periodic solutions. The explicit solutions of Harmonic Vlasov prove that without this symmetry reduction, the trajectories are rarely closed and cannot be found by simply trying to solve the unreduced equation
\begin{equation*}
\lambda~ \partial_t\beta(t) - \HamVec{\HHF}(\beta) = 0, \quad \beta: \RN/\IN \to \Lp{2}{}, \quad \lambda > 0, 
\end{equation*}
around some equilibrium. Nevertheless, this seems to be a different story on the quotient manifold, where all periods predicted by the spectrum of the reduced vector field at the relative equilibrium could be naturally identified in periodic families bifurcating at that equilibrium.

Thirdly, the method is not able to deal with the translation invariance of the Hamiltonian Vlasov system in $\bx$ and $\xi$. They contribute to the kernel of the spectrum at critical points, a tricky point in applying general bifurcation theorems. While the systematic approach of Marsden-Weinstein seems to be valid also in this case, as indicated by the $4\ddim$ dimensional complement in Theorem \ref{thm:harmonic:spectral-parameterization}, the heavily used technical assumption of a smooth group action is not easily removed.

At last, the method encourages the discussion of other systems around known equilibria as many Hamiltonian PDEs show the structure of global $\Sph{1}$ phase invariance. Probably the largest obstacle is to find a representation of the equilibrium explicit enough to compute the spectrum of the first derivative of the reduced Hamiltonian vector field.

\section*{Conflict of interest statement}

The author declares that there are no conflicts of interest, because this work has not been funded by third parties.

\end{document}